\newtheorem{theorem}{Theorem}%[section]
\newtheorem{lemma}{Lemma}[section]
\newtheorem{proposition}{Proposition}[section]
\newtheorem{corollary}{Corollary}[section]
\newtheorem{definition}{\it Definiton}[section]
\numberwithin{equation}{section}
\begin{document}
\title{Crofton formulae for products}
\author{Dmitri Akhiezer and Boris Kazarnovskii}
\address {Institute for Information Transmission Problems \newline
19 B.Karetny per.,127051, Moscow, Russia,\newline
{\rm D.A.:} 
{\it akhiezer@iitp.ru},  
{\rm B.K.:} 
{\it kazbori@gmail.com}.}
\thanks{MSC 2010: 52A39, 58A05}
\keywords{Crofton data, normal density, mixed volume}
\thanks{Research was carried out at the Institute for Information Transmission Problems of the Russian Academy of Sciences}

\begin{abstract} It is shown how new integral-geometric formulae can be obtained from the existing formulae of Crofton type. In particular, for classical Crofton formulae in which the answer depends on the Riemannian volume, 
we obtain generalizations in terms of the mixed Riemannian volume defined in the paper. The method is based on the calculations
in the ring of normal densities constructed in the previous work of the authors.
\end{abstract}
\renewcommand{\subjclassname}
{\textup{2010} Mathematics Subject Classification}
\maketitle
\renewcommand{\thefootnote}{}
\maketitle

\section{Introduction}\label{intro}

Let $Y$ be a manifold with a family of submanifolds $\{Y_\gamma\}_{\gamma\in\Gamma}$,
where
${\rm codim}\ Y_\gamma= n$ and the parametrizing manifold $\Gamma$ is equipped with a measure $\mu$.
In this setting, we will say that we have
{\it Crofton data}, to be denoted by ${\{Y, \Gamma, \mu}\}$.
Sometimes it is possible to compute, 
for any $n$-dimensional submanifold $M\subset Y$,
the average number    
$\mathfrak M_{Y, \Gamma, \mu}(M) $ of intersection points of $M$ and $Y_\gamma $.
Usually the answer is given by an $n$-density $\Omega_{Y, \Gamma, \mu}$ on $Y$,
such that 
\begin{equation}
\mathfrak M _{Y, \Gamma, \mu}(M)=\int_M\Omega _{Y, \Gamma, \mu}. \label{croftonformula}
\end{equation}
The density $\Omega _{Y, \Gamma, \mu}$ is called the {\it Crofton density}.
As a rule, Crofton data appear in the situation when the measure $\mu $
is constructed on the basis of some geometric structure on $Y$. 
If the Crofton density can be explicitly computed in terms of this geometric structure
then one calls 
(\ref{croftonformula}) the {\it Crofton formula.}
For example, in most known classical cases $Y$ is a Riemannian manifold 
and the restriction of $\Omega_{Y,\Gamma,\mu}$ to $M$ is, up to a factor independent of $M$, the volume density
of the induced metric, see \cite{Sa}.

Our main result
produces a Crofton formula for the product of Crofton data
$\{Y_1\times Y_2, \Gamma _1 \times \Gamma _2, \mu_1\times\mu_2\}$,
once such a formula is established for $\{Y_1,\Gamma_1,\mu_1\}$ and $\{Y_2, \Gamma _2,\mu_2\}$.
In the holomorphic case one has the Crofton formula for $Y={\mathbb P}^N$ with $Y_\gamma $ being 
projective subspaces of codimension $n$, see e.g.\,\cite{Sh},\cite{Shi}. In particular, for $n=1$ we get the Crofton formula
for curves in ${\mathbb P}^N$, from which one can deduce
a similar formula for surfaces in ${\mathbb P}^{N_1}\times{\mathbb P}^{N_2}$, see \cite{Ka}.
In these holomorphic versions of the Crofton formula,
we have some differential form in place of a Crofton density
and these forms are multiplied when we move to the product space.
Following this recipe in the real case, one has to multiply densities instead of exterior forms.
However, this is in general impossible, but we introduce a special class of {\it normal densities} which can be multiplied,
see Sect.\ref{preliminaries}.

Normal densities on a manifold $X$ of dimension $n$ are the analogue of, on the one hand, differential forms on $X$ and,
on the
other hand, affine Crofton densities on ${\mathbb R}^n$ defined by I.M.Gelfand an M.M.Smirnov in \cite{GS}.
It is shown in \cite{AK} that normal densities form a graded commutative ring ${\mathfrak n}(X) $ depending
contravariantly on $X$.
Under some natural assumptons, we will prove that
Crofton densities are normal and behave nicely in products.
This leads to an algorithm giving new integral-geometric formulae
on the basis of existing formulae of Crofton type. In particular, we deduce the Crofton formula
for the product of spheres.

We now state our main results.
Given
Crofton data  ${\{Y, \Gamma, \mu}\}$,  
put
$$X=\{(y,\gamma) \in Y \times \Gamma \ \vert \ y \in Y_\gamma\}$$
and denote by $p: X\to Y$ and $q: X\to \Gamma $ the projection maps. 
Suppose
that $p, q$ are locally trivial fiber bundles
and $\mu $ is given by a positive top-order smooth density $\Phi$, i.e., $\mu (B) = \int _B\Phi $ 
for any relatively compact open subset $B \subset \Gamma $.  Then, for any submanifold $M \subset Y$
of dimension $n$,
we have 
$$\int _{\gamma \in \Gamma} {\#}(M\cap Y_\gamma) \cdot d\mu =\int_M p_*q^*\Phi ,$$
where $p_*$ and $q^*$ stand for push-forward and pull-back operations for densities.  
To get finite expressions, we assume further
that the fiber of $p$ is compact, see \cite{PF} for the details.
\begin{theorem} \label{crofton density}The Crofton density $p_*q^*\Phi$, as well as its restriction to
any submanifold $M \subset Y$,  is normal.
\end{theorem}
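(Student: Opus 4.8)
The plan is to prove normality pointwise, by writing the fiber integral $p_*q^*\Phi$ explicitly in the shape of a Crofton (cosine) transform of a smooth family of measures on a Grassmann bundle, which is precisely the form of a normal density in the sense of \cite{AK}. Write $m=\dim Y$ and $d=\dim\Gamma$, and recall that an $n$-density at $y$ is a function on $n$-frames $\tau=(v_1,\dots,v_n)$ of $T_yY$ scaling by $|\det|$ under $GL(n)$; by \cite{AK} such a density is normal exactly when it admits a representation
\begin{equation}
\phi_y(\tau)=\int_{E\in\mathrm{Gr}_{m-n}(T_yY)}\bigl|{\det}\,\bigl(P_{E^\perp}|_{\langle\tau\rangle}\bigr)\bigr|\,d\nu_y(E),\label{normalrep}
\end{equation}
where $P_{E^\perp}$ is projection onto a complement of $E$ and $y\mapsto\nu_y$ is a smooth family of (signed) measures on the Grassmannian of codimension-$n$ subspaces. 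So it suffices to produce such a $\nu_y$ for $\phi=p_*q^*\Phi$.

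First I would localize. Since $p$ and $q$ are locally trivial fiber bundles and $\Phi$ is a positive smooth top-order density, I may work over a chart around a point $y_0$ and trivialize the fiber $F_y=p^{-1}(y)$, which is compact by hypothesis. Unwinding the definitions of the pullback $q^*$ and the fiber pushforward $p_*$ of densities (see \cite{PF}), the value of $p_*q^*\Phi$ on an $n$-frame $\tau\subset T_yY$ equals the integral over $F_y$ of $q^*\Phi$ evaluated on a $d$-frame that projects under $dp$ onto $\tau$ and is completed by a basis of $T_{(y,\gamma)}F_y$. Passing through $dq$, the fiber part contributes the Jacobian of $q$ along $F_y$, independent of $\tau$, while $\tau$ contributes the factor measuring its transversality to $Y_\gamma$, namely $\bigl|\det\bigl(P_{(T_yY_\gamma)^\perp}|_{\langle\tau\rangle}\bigr)\bigr|$. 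Introducing the Gauss map $g\colon F_y\to\mathrm{Gr}_{m-n}(T_yY)$, $(y,\gamma)\mapsto T_yY_\gamma$, and pushing the $\Phi$-derived measure on $F_y$ forward by $g$ gives a measure $\nu_y$ on $\mathrm{Gr}_{m-n}(T_yY)$; the computation then yields exactly the representation (\ref{normalrep}). Smoothness of $y\mapsto\nu_y$ follows from smoothness of $\Phi$ together with the local triviality of $p$ and $q$, and finiteness from compactness of $F_y$. Hence $p_*q^*\Phi$ is normal.

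For the restriction I would appeal to functoriality. By \cite{AK} the assignment $X\mapsto\mathfrak n(X)$ is contravariant, so the inclusion $\iota\colon M\hookrightarrow Y$ induces a pullback $\iota^*\colon\mathfrak n(Y)\to\mathfrak n(M)$, and $\iota^*(p_*q^*\Phi)$ is precisely the restriction of the density to the tangent frames of $M$. Since $p_*q^*\Phi\in\mathfrak n(Y)$, its restriction lies in $\mathfrak n(M)$ and is therefore normal. One may also argue directly: as $\dim M=n$, the restriction is a top-order density on $M$, and in the top degree (\ref{normalrep}) degenerates to $\nu_y(\mathrm{pt})\cdot|\det\tau|$, so every smooth top-order density is normal, the required smoothness being inherited from that of the Crofton density.

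The main obstacle I expect is the middle step: rigorously justifying the fiber-integration formula for densities and extracting from it the clean cosine kernel of (\ref{normalrep}). Concretely, one must verify that the choice of lift of $\tau$ does not affect the result, that the Jacobian of $q$ factors out independently of $\tau$, and—most delicately—that the pushforward $\nu_y=g_*(\cdots)$ is a well-defined smooth family even where $g$ is non-injective or where $Y_\gamma$ fails to be transverse to $\langle\tau\rangle$, so that the kernel $\bigl|\det\bigl(P_{E^\perp}|_{\langle\tau\rangle}\bigr)\bigr|$ degenerates. Compactness of $F_y$ and smoothness of $\Phi$ are exactly what control these degeneracies and keep the family $y\mapsto\nu_y$ smooth.
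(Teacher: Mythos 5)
Your proposal is correct and is essentially the paper's own argument: the paper reduces Theorem \ref{crofton density} to Prop.\,\ref{normal} (push-forwards of locally decomposable densities are normal), whose proof constructs at each point exactly your Gauss-map measure, namely $\mu=\int_{F_y}\mu_{\omega_x}$ with $\mu_{\omega_x}$ the normal measure concentrated on affine subspaces parallel to ${\rm Ker}\,\omega_x=T_yY_\gamma$, so that your Grassmannian integral representation is precisely $\chi_{m-k}(\mu)$ in the paper's notation. The technical points you flag as the main obstacle (independence of the lift, degeneration of the kernel, continuity of the family of measures along the fiber) are exactly what the paper's Lemmas \ref{decomp1}, \ref{decomp2} and \ref{cont} supply, and the restriction to $M$ is handled, as you suggest, by contravariance of normal densities.
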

\noindent
For the product of two Crofton data of the above type, the Crofton density exists and is equal
to the product of Crofton densities of the factors. More precisely, we have the following theorem.
 
\begin{theorem}\label{crofton product} 
Let $\{Y_1, \Gamma _1, \mu_1\}$ and $\{Y_2, \Gamma _2 ,\mu _2\}$ be two Crofton data and $\pi_1, \pi _2$
the projection maps of $Y _1 \times Y _2$ to $Y_1$ and $ Y_2$, respectively.
Then $\Omega _{Y_1\times Y_2, \Gamma _1 \times \Gamma _2, \mu_1 \times \mu_2}
=\pi_1^*\Omega _{Y_1, \Gamma _1, \mu _1}\cdot \pi _2^*\Omega _{Y_2, \Gamma _2, \mu_2}$,
the restrictions $\Omega _{M,i}$ of $\pi _i^*\Omega _{Y_i,\Gamma _i, \mu_i}$, $i =1,2$, to any $M \subset Y_1\times Y_2$
are normal and

$$\int _{(\gamma _1,\gamma _2 )\in {\Gamma _1 \times \Gamma _2}}{\#}(M\cap (Y_{\gamma _1} \times Y_{\gamma _2}) )
\cdot d\mu_1\cdot d \mu_2=
\int _M\Omega _{M,1}\cdot \Omega _{M,2}.$$

\end{theorem}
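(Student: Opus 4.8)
The plan is to reduce Theorem~\ref{crofton product} to the single-factor Crofton formula together with a product formula for the push-forward and pull-back operations on densities, exactly as outlined in the introduction. First I would identify the incidence manifold for the product data. Writing $X_i=\{(y_i,\gamma_i)\mid y_i\in Y_{\gamma_i}\}$ with projections $p_i:X_i\to Y_i$, $q_i:X_i\to\Gamma_i$ for $i=1,2$, the incidence manifold $X$ for the product data $\{Y_1\times Y_2,\,\Gamma_1\times\Gamma_2,\,\mu_1\times\mu_2\}$ is naturally identified with $X_1\times X_2$, since the condition $(y_1,y_2)\in Y_{\gamma_1}\times Y_{\gamma_2}$ is simply the conjunction $y_1\in Y_{\gamma_1}$ and $y_2\in Y_{\gamma_2}$. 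Under this identification the projection maps factor as $p=p_1\times p_2$ and $q=q_1\times q_2$, and the defining measure is $\Phi=\pi_{\Gamma_1}^*\Phi_1\cdot\pi_{\Gamma_2}^*\Phi_2$. One must check the hypotheses of Theorem~\ref{crofton density}: that $p,q$ are locally trivial fiber bundles with $p$ having compact fiber. This is immediate, as a product of locally trivial bundles is locally trivial and the fiber of $p$ is the product of the fibers of $p_1$ and $p_2$, hence compact.

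\textbf{The core computation} is then to verify the multiplicativity
$$(p_1\times p_2)_*(q_1\times q_2)^*(\pi_{\Gamma_1}^*\Phi_1\cdot\pi_{\Gamma_2}^*\Phi_2)
=\pi_1^*\bigl((p_1)_*q_1^*\Phi_1\bigr)\cdot\pi_2^*\bigl((p_2)_*q_2^*\Phi_2\bigr),$$
which is the asserted identity $\Omega_{Y_1\times Y_2,\dots}=\pi_1^*\Omega_{Y_1,\dots}\cdot\pi_2^*\Omega_{Y_2,\dots}$. I would establish this by a pair of general lemmas on densities: that pull-back commutes with products in the obvious sense, $(f_1\times f_2)^*(\alpha_1\boxtimes\alpha_2)=f_1^*\alpha_1\boxtimes f_2^*\alpha_2$ where $\boxtimes$ denotes the exterior product of densities on a product manifold, and that push-forward along a product of fiber bundles likewise factors, $(p_1\times p_2)_*(\beta_1\boxtimes\beta_2)=(p_1)_*\beta_1\boxtimes (p_2)_*\beta_2$. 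The second identity is a Fubini-type statement: integration over the product fiber $F_1\times F_2$ decomposes as iterated integration over $F_1$ and $F_2$. Both are local computations in fiber-bundle charts, so I would verify them pointwise in trivializing coordinates, where densities are genuine positive functions times the coordinate density and the product structure is transparent.

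\textbf{The main obstacle}, and the reason the introduction emphasizes normal densities, is the passage from the equality of ambient Crofton densities to the integral formula over a submanifold $M\subset Y_1\times Y_2$ that is \emph{not} a product. On such an $M$ one cannot naively restrict and multiply arbitrary densities, since the product of densities is ill-defined in general; the restriction $\pi_i^*\Omega_{Y_i,\Gamma_i,\mu_i}|_M$ lands in the ring $\mathfrak n(M)$ of normal densities precisely because normality is preserved under the contravariant operations established in \cite{AK}. Here I would invoke Theorem~\ref{crofton density} applied to the product data to conclude that $p_*q^*\Phi|_M$ is normal, and then argue that this normal density equals the product $\Omega_{M,1}\cdot\Omega_{M,2}$ \emph{taken in the ring $\mathfrak n(M)$}. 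The delicate point is that $\Omega_{M,1}\cdot\Omega_{M,2}$ must be interpreted as the product of the restrictions inside $\mathfrak n(M)$, and one needs that restriction to $M$ commutes with the ring multiplication; this follows from the contravariant functoriality of $\mathfrak n(\cdot)$, since $\Omega_{M,i}=j^*\pi_i^*\Omega_{Y_i,\dots}$ for the inclusion $j:M\hookrightarrow Y_1\times Y_2$, and $j^*$ is a ring homomorphism.

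\textbf{Finally}, with the density identity in hand, the integral formula is immediate from the single-factor Crofton formula \eqref{croftonformula} applied to the product data: the left-hand side counts intersection points $M\cap(Y_{\gamma_1}\times Y_{\gamma_2})$ averaged against $\mu_1\times\mu_2$, which by Theorem~\ref{crofton density} equals $\int_M p_*q^*\Phi$, and the latter equals $\int_M\Omega_{M,1}\cdot\Omega_{M,2}$ by the multiplicativity just proved. I expect the fiber-integration step to be routine modulo careful bookkeeping of orientations and the compact-fiber convergence assumption, while the genuinely conceptual content is the reduction to the ring structure $\mathfrak n(M)$, which is what makes the product of Crofton densities meaningful on a non-product submanifold.
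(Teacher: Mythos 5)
Your overall architecture coincides with the paper's: identify the incidence manifold of the product data with $X_1\times X_2$ (so $p=p_1\times p_2$, $q=q_1\times q_2$, $\Phi=\Phi_1\boxtimes\Phi_2$), reduce the theorem to a multiplicativity statement for pull-back and push-forward, and handle the restriction to a non-product $M$ by contravariance of $\mathfrak n(\cdot)$, i.e.\ by the fact that $j^*$ for the inclusion $j:M\hookrightarrow Y_1\times Y_2$ is a ring homomorphism; that last step is done exactly as in the paper, and your pull-back lemma is also unproblematic (it follows from functoriality of $\mathfrak n$ and the fact that $\vert\omega_1\wedge\omega_2\vert=\vert\omega_1\vert\cdot\vert\omega_2\vert$ for decomposable forms). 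The genuine gap is in what you call the core computation, the Fubini-type identity $(p_1\times p_2)_*(\beta_1\boxtimes\beta_2)=(p_1)_*\beta_1\cdot(p_2)_*\beta_2$. This is precisely Prop.\,\ref{product} of the paper and is the real technical content of the theorem, and it cannot be established ``pointwise in trivializing coordinates, where densities are genuine positive functions times the coordinate density.'' That description applies only to top-order densities. Here the push-forwards $(p_i)_*\beta_i=\Omega_{Y_i,\Gamma_i,\mu_i}$ are densities of intermediate degree $n_i<\dim Y_i$: such densities are functions on the cone of decomposable vectors, they admit no coordinate expression of the kind you invoke, and they need not be locally decomposable --- Prop.\,\ref{normal} guarantees only that they are \emph{normal}. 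Consequently the right-hand side of your identity is not an exterior product of forms or of coordinate expressions at all; it is the ring product in $\mathfrak n(Y_1\times Y_2)$, which is defined only through representing normal measures (apply $\chi$ to the product of measures). There is no pointwise formula for this product on which a local verification could operate.

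Concretely, a chart computation can at best verify the equality on \emph{split} decomposable vectors $\eta_1\wedge\eta_2$, with $\eta_i$ a decomposable multivector tangent to the factor $Y_i$; there both sides factor and Fubini applies to the fiber integrals. But two densities on $Y_1\times Y_2$ that agree on split vectors need not coincide: a decomposable vector such as $(\xi_1+\xi_2)\wedge(\zeta_1+\zeta_2)$, with $\xi_i,\zeta_i$ tangent to the two factors, is not split, and on such vectors neither side factors --- the fiber integral defining the left-hand side does not decompose into a product of two integrals, while the right-hand side is by definition a measure of a set of pairs of affine subspaces. This is exactly why the paper proves Prop.\,\ref{product} at the level of normal measures: as in the proof of Prop.\,\ref{normal}, one constructs measure-valued densities $x_i\mapsto\mu_{x_i}$ along the fibers with $\chi(\mu_{x_i})=\delta_{x_i}$, observes that for measures supported on affine Grassmannians of complementary direct summands of $T_{x_1}(X_1)\oplus T_{x_2}(X_2)$ the ring product coincides with the direct product $\mu_{x_1}\times\mu_{x_2}$, applies Fubini to the measure-valued integrals, $\int_{F_{y_1}\times F_{y_2}}\mu_{x_1}\times\mu_{x_2}=\int_{F_{y_1}}\mu_{x_1}\times\int_{F_{y_2}}\mu_{x_2}$, and only then applies $\chi$, which settles the equality on \emph{all} decomposable vectors simultaneously. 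Your proposal is missing this lift to measures; without it, the passage from agreement on split vectors to equality of normal densities does not go through. (A minor point: densities require no orientation bookkeeping, so that concern is vacuous; convergence is handled by compactness of the fibers, as in the paper.)
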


Theorem\,\ref{crofton density} is proved in Sect.\ref{decomp}. Theorem\,\ref{crofton product} is proved in Sect.\ref{prod}.

There are classical examples of Crofton data $\{Y, \Gamma, \mu\}$, such that $Y=(Y,g)$ is a Riemannian manifold with metric $g$ and
$\Omega _{Y,\Gamma,\mu} =C\cdot {\rm vol}_{n,g}$,
where the $n$-density ${\rm vol}_{n,g}$ assigns to $\xi_1 \wedge \ldots \wedge \xi_n \in \bigwedge ^nT_y(Y)$
the $n$-dimensional $g$-volume of the parallelotope generated by $\xi_i$. In this case,
${\mathfrak M}_{Y,\Gamma,\mu}(M) = C\cdot{\rm vol}_g(M)$,
where $C=C(n)$ is some constant.
Assume there is a number of such data $\{Y_i , \Gamma _i, \mu_i\}$, where $Y_i = (Y_i, g_i)$.
In Sect.\ref{mixedriemann} we define the mixed Riemannian volume ${\rm vol}_{g_1, \ldots, g_n}(M)$ under a mixing $n$-tuple of non-negative quadratic forms $g_1, \ldots, g_n$
on a given manifold $M$, ${\rm dim} M=n$. If these forms coincide with a metric $g$ on $M$ 
then the mixed volume equals ${\rm vol}_g(M)$.
Let ${\rm v}_k$ be the volume of the unit ball in ${\mathbb R}^k$. In Sect.\ref{mixedriemann}, we prove the following theorem.

\begin{theorem}\label {crofton mixed}
Let $\{Y, \Gamma, \mu\}= \{Y_1, \Gamma _1 ,\mu_1\}\times \ldots \times \{Y_m ,\Gamma _m ,\mu _m\}$
and let $n = n_1 +\ldots+n_m$. Then for any $n$-dimensional submanifold $M \subset Y= Y_1\times \ldots \times Y_m$
we have
$$\mathfrak M_{Y,\Gamma, \mu}(M) = C_1\cdot \ldots \cdot C_m\cdot {n!{\rm v}_n \over (n_1!{\rm v}_{n_1})\cdot
\ldots \cdot (n_m!{\rm v}_{n_m})}\cdot {\rm vol}_{g^*}(M),$$ 
where $g^* = (g_1^*, \ldots, g_1^*, \ldots, g_m^*,\ldots , g_m^*)$ is the mixing
$n$-tuple of non-negative quadratic forms $g_i^*$ coming from $g_i$ on $Y_i$,
in which each $g_i^*$ occurs $n_i$ times.
\end{theorem}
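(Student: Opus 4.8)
The plan is to deduce everything from the product Crofton formula of Theorem \ref{crofton product} together with the dictionary, in the ring of normal densities, between the norm densities $\sqrt{g}$ and the single-form volume densities. First I would extend Theorem \ref{crofton product} from two to $m$ factors. Since $\mathfrak{n}(M)$ is a graded commutative ring by \cite{AK}, the $m$-fold product of the restrictions is well defined, and induction on $m$ using associativity gives
$$\mathfrak M_{Y,\Gamma,\mu}(M)=\int_M\Omega_{M,1}\cdots\Omega_{M,m},$$
where $\Omega_{M,i}$ is the restriction to $M$ of $\pi_i^*\Omega_{Y_i,\Gamma_i,\mu_i}$ and the product is taken in $\mathfrak{n}(M)$.

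Next I would identify each factor. By hypothesis $\Omega_{Y_i,\Gamma_i,\mu_i}=C_i\cdot{\rm vol}_{n_i,g_i}$, and since $\pi_i$ is a submersion the pull-back of a volume density is again a volume density: $\pi_i^*{\rm vol}_{n_i,g_i}={\rm vol}_{n_i,g_i^*}$, where $g_i^*=\pi_i^*g_i$ is the non-negative (generally degenerate) quadratic form on $Y$ coming from $g_i$. Indeed, both densities assign to $\xi_1\wedge\cdots\wedge\xi_{n_i}$ the value $\sqrt{\det(g_i^*(\xi_a,\xi_b))}$. Restricting to $M$ and pulling out the constants yields
$$\mathfrak M_{Y,\Gamma,\mu}(M)=C_1\cdots C_m\int_M{\rm vol}_{n_1,g_1^*}\cdots{\rm vol}_{n_m,g_m^*},$$
the forms $g_i^*$ now being understood as restricted to $M$.

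The heart of the matter is to convert this product of single-form densities into the mixed Riemannian volume density of the $n$-tuple $g^*$ in which each $g_i^*$ is repeated $n_i$ times. The key ring identity, to be recorded in Section \ref{mixedriemann}, is that the $k$-th power of the norm density $\nu_h=\sqrt{h}$ of a non-negative form $h$ equals $k!\,{\rm v}_k\cdot{\rm vol}_{k,h}$; equivalently, the product of $k$ degree-one normal densities is $k!$ times the mixed volume of the associated convex bodies. Writing ${\rm vol}_{n_i,g_i^*}=(n_i!\,{\rm v}_{n_i})^{-1}\,\nu_{g_i^*}^{\,n_i}$, and recalling that the mixed Riemannian volume density is defined with the matching normalization, namely ${\rm vol}_{g^*}=(n!\,{\rm v}_n)^{-1}\,\nu_{g_1^*}^{\,n_1}\cdots\nu_{g_m^*}^{\,n_m}$ (so that the diagonal $g_i^*=g$ recovers the ordinary volume), one obtains
$${\rm vol}_{n_1,g_1^*}\cdots{\rm vol}_{n_m,g_m^*}=\frac{n!\,{\rm v}_n}{(n_1!\,{\rm v}_{n_1})\cdots(n_m!\,{\rm v}_{n_m})}\,{\rm vol}_{g^*},$$
and integrating over $M$ gives the asserted formula. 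The main obstacle is precisely this constant bookkeeping: establishing the normalization $\nu_h^{\,k}=k!\,{\rm v}_k\,{\rm vol}_{k,h}$ in $\mathfrak{n}$ and checking that the definition of the mixed Riemannian volume in Section \ref{mixedriemann} is compatibly normalized, so that the universal factors combine into the stated ratio of factorials and ball volumes. Once the single-form volume density is related to a power of the norm density, commutativity of $\mathfrak{n}$ and the multilinearity built into mixed volumes make the remaining assembly routine.
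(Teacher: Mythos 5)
Your proposal follows the same route as the paper's own proof: extend Theorem \ref{crofton product} to $m$ factors (the paper does this implicitly, you make the induction explicit), write each Crofton density as $C_i\cdot{\rm vol}_{n_i,g_i}$, pull back via $\pi_i$ so that $\pi_i^*{\rm vol}_{n_i,g_i}={\rm vol}_{n_i,g_i^*}$, convert the product of the ${\rm vol}_{n_i,g_i^*}$ into the mixed Riemannian volume density via the ring identity relating ${\rm vol}_{k,h}$ to powers of ${\rm vol}_{1,h}$, and collect constants. Your final conversion formula
$${\rm vol}_{n_1,g_1^*}\cdots{\rm vol}_{n_m,g_m^*}=\frac{n!\,{\rm v}_n}{(n_1!\,{\rm v}_{n_1})\cdots(n_m!\,{\rm v}_{n_m})}\,{\rm vol}_{g^*}$$
is exactly the one the paper uses, so the theorem does follow from your argument.

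However, both of your intermediate normalization claims are off by powers of $2$, and only their cancellation saves the computation. In the paper's ring of normal densities, the degree-one density attached to the ellipsoid ${\mathcal T}_h$ is $D_1({\mathcal T}_h)=2\sqrt{h}$, not $\sqrt{h}$, because $d_1(A)(\xi)=h_A(\xi)-h_A(-\xi)=2h_A(\xi)$ for a centrally symmetric body $A$. Hence the correct identity is Corollary \ref{cor2}, ${\rm vol}_{k,h}=\frac{2^k}{k!\,{\rm v}_k}\,{\rm vol}_{1,h}^k$, whereas your claim $\nu_h^k=k!\,{\rm v}_k\,{\rm vol}_{k,h}$ fails already at $k=1$: since ${\rm v}_1=2$ and $\nu_h={\rm vol}_{1,h}$, it would read $\sqrt{h}=2\sqrt{h}$. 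Likewise, the paper's Definition \ref{mixedR} carries the factor $\frac{2^n}{n!\,{\rm v}_n}$, not $\frac{1}{n!\,{\rm v}_n}$; with your normalization the diagonal case $g_i^*=g$ would yield $2^{-n}{\rm vol}_{n,g}$ rather than the Riemannian volume, so the consistency check you invoke does not actually hold for your stated constants. Because you dropped the factor $2^{n_i}$ in each identity and the factor $2^n$ in the definition, the two errors cancel in the ratio between $\prod_i{\rm vol}_{n_i,g_i^*}$ and ${\rm vol}_{g^*}$, and the displayed conversion formula (hence the theorem) comes out right. To make the proof correct as written, replace your key identity and your normalization of ${\rm vol}_{g^*}$ by Corollary \ref{cor2} and Definition \ref{mixedR} with their actual powers of $2$; everything else in your argument then goes through unchanged and coincides with the paper's proof.
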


\noindent
In Sect.\ref{examples}, we apply Theorems \ref{crofton density}\,-\,\ref{crofton mixed}
for the derivation of two Crofton formulae. Using these, we consider some examples in which we compute the average number
of common zeros of $n$ smooth functions on a manifold of dimension $n$. 
It turns out that the answer
depends on the mixed volume of some Finsler ellipsoids.
We view these computations as a smooth analogue of the BKK theorem \cite{Be}.

\section{Preliminaries}\label{preliminaries}
We gather here some definitions and results from \cite{AK} which will be used later on. Let $V$ be a real vector space of dimension $n$.
Throughout the paper we use the following notations:

- ${\rm Gr}_m(V)$ is the Grassmanian, whose points are vector subspaces
of dimension $m$ in $V$;

 - ${\rm D}_m(V)\subset \bigwedge^m(V)$ is the cone of decomposable $m$-vectors;

- ${\rm AGr}^m(V)$ is the affine Grassmanian, whose points are affine
subspaces of codimension $m$ in $V$.

\medskip
\noindent
{\it The ring of normal measures.}
A signed Borel measure $\mu $ on ${\rm AGr}^m (V)$ is called {\it normal}, if $\mu $ is translation invariant and finite on compact sets. The space of normal measures on ${\rm AGr}^m(V)$ is denoted by ${\mathfrak m}_m(V)$. 
The graded vector space ${\mathfrak m}(V) = {\mathfrak m}_0(V) \oplus {\mathfrak m}_1(V) \oplus \ldots \oplus
{\mathfrak m}_n(V)$ has a structure of a graded ring. Namely,
let 
$${\mathcal D}_{p,q} = \{(G,H) \in {\rm AGr}^p(V) \times {\rm AGr}^q(V)\ \vert \ {\rm codim}\, G\cap H \ne p+q\}.$$
Then we have the mapping 
$$P_{p,q}: {\rm AGr}^p(V) \times {\rm AGr}^q(V) \setminus {\mathcal D}_{p,q} \to {\rm AGr}^{p+q}(V),$$
given
by $P_{p,q}(G,H) = G\cap H$. For $\mu \in {\mathfrak m}_p(V), \, \nu \in {\mathfrak m}_q(V), \, p+q \le n,$
the measure $\mu \cdot \nu \in{\mathfrak m}_{p+q}(V)$
is defined by
$(\mu \cdot \nu ) (D) = (\mu \times \nu)(P_{p,q}^{-1}(D)),$
where $D \subset {\rm AGr}^{p+q}(V)$ is a bounded domain.

\medskip
\noindent
{\it The ring of normal densities on a manifold.}
Recall that an $m$-density on $V$ is a continuous function 
$\delta : {\rm D}_m (V) \to {\mathbb R}$, such that $\delta (t\xi) = \vert t \vert \delta(\xi)$ for all $t\in {\mathbb R}$.
One example of an $m$-density is the absolute value of an exterior $m$-form.
An $m$-density on a manifold $X$ is an $m$-density $\delta _x$ on each tangent space $T_x$,
such that the assignment $x\mapsto \delta _x$ is continuous.
We consider a density on $V$ as a translation invariant density on the corresponding affine space.
Such a density is called {\it normal} if it is obtained from a normal measure
on ${\rm AGr}^m(V)$ by the construction which we now describe.  Namely, for $D \subset V$ put
$${\mathcal J}_{m,D} = \{H \in {\rm AGr}^m(V) \ \vert \ H \cap D \ne \emptyset \}$$
and  for $\xi_1,\ldots,\xi_m \in V$ denote by $\Pi_\xi \subset V$ the parallelotope generated by $\xi _i$.
Given $\mu \in {\mathfrak m}_m$,  define a normal $m$-density $\chi _m(\mu)$ by 
$$\chi _m(\mu)(\xi_1\wedge \ldots \wedge \xi_m ) = \mu ({\mathcal J}_{m,\Pi_\xi}).$$
Then we have the spaces of normal $m$-densities
${\mathfrak n}_m = {\mathfrak n}_m(V) , m = 0,1,\ldots,n$, their direct sum
${\mathfrak n}(V) =\oplus \, {\mathfrak n}_m(V)$ and the linear map
$\chi = \oplus \chi_m : {\mathfrak m}(V) \to {\mathfrak n}(V)$. It is easily seen that ${\rm Ker}\, \chi$
is a homogeneous ideal of the graded ring ${\mathfrak m}(V)$. Therefore ${\mathfrak n}(V)$ carries a structure of a graded ring, the {\it ring of normal densities on $V$}.
We refer to \cite{AK}, Sect.\,3.3., for the connection of ${\mathfrak n}(V)$ with the ring of valuations
on convex bodies defined by S.\,Alesker \cite{Al}.

\medskip
\noindent
 Let $X$ be a smooth manifold of dimension $n$. A normal $m$-density on $X$
is a continuous function $x \mapsto \delta _x \in {\mathfrak n}_m(T_xX)$.
The pointwise construction of product leads to the definition of {\it the ring of normal densities on $X$},
denoted by ${\mathfrak n}(X)$.
As an example of a normal density, we will prove in the next section
that the absolute value $\vert \omega \vert $ of a decomposable differential $m$-form $\omega $
is a normal $m$-density. Morever, one can show that for decomposable forms multiplication
of forms agrees with multiplicaion of densities. More precisely, if $\omega_1, \omega _2$ are decomposable forms then $\vert \omega _1 \wedge \omega  _2\vert = \vert \omega _1 \vert \cdot \vert \omega _2\vert$.

\medskip
\noindent
{\it Contravariance of ${\mathfrak n}(X)$}.
The assignment $X \to {\mathfrak n}(X)$ is a contravariant functor from the category of smooth manifolds to the
category of commutative graded rings. Namely, the pull-back of a normal density is normal
and the product of pull-backs is the pull-back of the product of normal densities. This property is based on the following
construction of {\it the pull-back of a normal measure}.  

For a linear map $\varphi: U \to V$ of vector spaces one can define the pull-back operation
$${\mathfrak m}_m (V) \ni \mu \mapsto \varphi^*(\mu)\in {\mathfrak m}_m(U),$$
so that $\chi _m (\varphi^*\mu ) = \varphi^*(\chi _m (\mu ))$.
For $\mu \in {\mathfrak m}_p(V), \nu \in {\mathfrak m}_q(V)$ one has
$$\chi _p(\varphi ^*\mu)\cdot \chi_q(\varphi ^*\nu) = \chi_{p+q}(\varphi ^*(\mu\cdot \nu)).$$
We need the notion of pull-back only for epimorphisms. If $\varphi $ is surjective then we have the closed 
imbedding 
$\varphi _*:{\rm AGr}^m(V) \to {\rm AGr}^m(U)$ defined by taking the preimage under $\varphi $ of an affine subspace in $V$. By definition, the measure $\varphi ^*\mu$ is
supported on $\varphi _*{\rm AGr}^m(V)$ and 
$$
\varphi^*\mu(\Omega) = \mu (\varphi _*^{-1}(\Omega ))$$ for any Borel set $\Omega \subset \varphi_*{\rm AGr}^m(V)$.

\medskip
\noindent
{\it Normal 1-densities.} It is shown in \cite{AK} that any smooth 1-density is normal.
This follows from certain properties of the cosine transform, see, e.g., \cite{AB}. Therefore ${\mathfrak n}(X)$ contains
the ring generated by smooth 1-densities.

\medskip
\noindent
{\it Normal densities and convex bodies.} Let $A_1\ldots, A_m$ be convex bodies in the dual space $V^*$. The $m$-density
$d_m(A_1, \ldots, A_m)$ on $V$ is defined as follows. 

\begin{definition}\label{d_m} Let $H$ be the subspace of $V$ generated by $\xi _1, \ldots, \xi_m\in V$,
$H^\bot \subset V^*$  the orthogonal complement to $H$, and $\pi _H: V^* \to V^*/H^\bot $
the projection map.
Consider $\xi_1\wedge \ldots \wedge \xi_m$ as a volume form on $V^*/H^\bot $.
Then
$d_m(A_1, \ldots, A_m)(\xi _1, \ldots, \xi _m)$ is the mixed
$m$-dimensonal volume of $\pi_HA_1, \ldots, \pi_HA_m$.

\end{definition}
\noindent
In particular,
$d_1(A)(\xi) = h(\xi) - h(-\xi )$, where $h : V \to { R}$ is the support
function of a convex body $A$. 
Furthermore, 
if $V$ and $V^*$ are identified with the help of some Euclidean metric then 

\begin{equation}\label{d_m(A)}d_m(A)(\xi_1, \ldots, \xi_m) = {\rm vol} _m
(\pi_HA )\cdot {\rm vol}_m (\Pi_\xi) \, , 
\end{equation}
where ${\rm vol}_m$ is the $m$-dimensional volume and $\Pi _\xi$ is the parallelotope generated by $\xi_1,\ldots, \xi_m$.

If $A$ is a smooth convex body of full dimension then the density $d_1(A)$
is smooth and, consequently, normal. Using the pull-back operaion
for normal measures and densities, one can show that 
$d_1(A)$ is normal for a smooth convex body of any dimension.
Suppose $A_1, \ldots, A_m$ are centrally symmetric smooth convex bodies. Then Theorem\,7 in \cite{AK}
asserts that
\begin{equation}
d_1(A_1)\cdot \ldots \cdot d_1(A_m) = m!\,d_m(A_1, \ldots, A_m). \label{*}
\end{equation}
Thus all densities $d_m(A_1, \ldots, A_m)$ are normal.

\medskip
\noindent
{\it Finsler convex sets.} Let $X$ be a smooth manifold of dimension $n$. Suppose that for every $x \in X$ we are given
a convex body ${\mathcal E}(x) \subset T_x^*$ depending continuously on $x\in X$.
The collection ${\mathcal E} = \{{\mathcal E}(x) \, \vert \, x \in X\}$ is called a Finsler convex set in $X$.

\medskip
\noindent
(1) Consider the domain
$$\bigcup _{x\in X}{\mathcal E }(x) \subset T^*(X).$$
By definition, the volume of ${\mathcal E}$ is the volume of this domain with respect to the standard symplectic structure on the cotangent bundle. Using Minkowski sum and homotheties, we 
can form linear combinations of convex sets with non-negative coefficients.
The linear combination of Finsler convex sets is defined by
$$(\sum _i \lambda _i {\mathcal E}_i)(x) = \sum _i \lambda _i {\mathcal E}_i(x).$$
For
$n$ Finsler convex sets ${\mathcal E}_1, \ldots, {\mathcal E}_n$ the volume of $\lambda _1 {\mathcal E}_1+\ldots+\lambda _n{\mathcal E}_n$ is a homogeneous polynomial of degree $n$ in $\lambda _1, \ldots, \lambda _n$. 
Its coefficient at $\lambda _1\cdot\ldots\cdot \lambda _n$ divided by $n!$ is called the mixed volume of Finsler convex sets ${\mathcal E}_1, \ldots, {\mathcal E}_n$.

\medskip
\noindent
(2) For
$m$ Finsler convex sets ${\mathcal E}_1, \ldots, {\mathcal E}_m$ denote by
$D_m({\mathcal E}_1, \ldots, {\mathcal E}_m)\in {\mathfrak n}_m(X)$ the $m$-density whose value at $x\in X$
equals $d_m({\mathcal E}_1(x), \ldots, {\mathcal E}_m(x))$. 
By definition, $D_m({\mathcal E}) = D_m({\mathcal E}_1, \ldots, {\mathcal E}_m)$,
where ${\mathcal E}_i = {\mathcal E}, i=1,\ldots,m$.
The integral
$\int _XD_n({\mathcal E}_1, \ldots, {\mathcal E}_n)$ is equal to the mixed volume of ${\mathcal E}_1, \ldots, {\mathcal E}_n$.

\medskip
\noindent
(3)
Under the assumptions of (\ref{*}) on $A_i = {\mathcal E}_i(x)$ at every $x \in X$ we have
\begin{equation}
D_1({\mathcal E}_1)\cdot \ldots \cdot D_1({\mathcal E}_m) = m!\, D_m({\mathcal E}_1, \ldots, {\mathcal E}_m). \label{**}
\end{equation}
Thus all densities $D_m({\mathcal E}_1, \ldots, {\mathcal E}_m)$ are normal.

\medskip
\noindent
(4) Given a $C^\infty $ map $f : X \to Y$ and a Finsler convex set ${\mathcal E}$ in $Y$, one can define its inverse image,
a Finsler convex set in $X$,  
by $f^*{\mathcal E}(x) = d^*f({\mathcal E}(f(x))$, where $d^*f$
is the dual to the differential of $f$. It is easy to see that 
$$D_m(f^*{\mathcal E}_1, \ldots, f^*{\mathcal E}_m) = f^*(D_m({\mathcal E}_1, \ldots, {\mathcal E}_m)), $$
i.e., the operation of inverse image of Finsler convex sets commutes with the pull-back of densities $D_m$.

\section{Decomposable forms and normal densities}\label{decomp}

\bigskip

In this section we prove Theorem\,\ref{crofton density}. The proof will be based on an auxiliary assertion  
(Prop.\,\ref{normal}). In order to state this, we start by recalling two important notions related to densities on smooth manifolds.

{\it Locally decomposable densities.} An exterior $m$-form $\omega $ on a manifold $X$ is said to be {\it locally decomposable}  
if $\omega $ is the product of 1-forms in a neighborhood of every point $x\in X$. 
Any top-order form is locally decomposable.
The pull-back of a locally decomposable form under a smooth mapping is also locally decomposable.
We will say that an $m$-density on $X$ is {\it locally decomposable} 
if
for every point $x \in X$
there is a neighborhood of $x$ in which $\delta $ is the absolute value of a decomposable $m$-form defined in that neighborhood. Any non-negative smooth top-order density is locally decomposable. The pull-back of a locally deomposable density is also locally decomposable. 

{\it Push-forward for densities.}  Let
$X,Y$ be smooth manifolds, $X$ a locally trivial fiber bundle over $Y$ with compact fiber,
$p: X \to Y$ the projection map, and $k$ the dimension of the fiber $F_y = p^{-1}(y),\ y \in Y$. Let $\delta $ be an $m$-density on $X$
and assume $k = {\rm dim}\,F_y \le m$. Then the push-forward $p_*\delta$ is an $(m-k)$-density  on $Y$,
defined as follows. Given $\eta _1, \ldots, \eta_{m-k} \in T_y(Y)$ and $x \in F_y$, choose arbitrarily $\xi_1, \ldots, \xi_{m-k} \in T_x(X)$,
so that $dp_x(\xi_i) = \eta _i$. Put $\xi = \xi_1 \wedge \ldots \wedge \xi_{m-k}$ and consider the 
density $\xi \lrcorner \,\delta $ on the fiber, defined by 
$$\xi\lrcorner \, \delta (f_1\wedge \ldots \wedge f_k) = \delta(\xi\wedge f_1 \ldots \wedge f_k),$$
where $f_1, \ldots, f_k$ are tangent to the fiber $F_y$ at $x$. Here, $\xi\lrcorner \, \delta (f_1\wedge \ldots \wedge f_k)$
does not depend on the choice of $\xi _i$. By definition,
$$p_*\delta (\eta _1 \wedge \ldots \wedge \eta_{m-k}) = \int _{F_{y}} \xi\lrcorner \, \delta.$$

\begin{proposition}\label{normal}
Let $\delta $ be a locally decomposable $m$-density on $X$. Then the push-forward $p _*\delta $ is a normal $(m-k)$-density on $Y$. 
\end{proposition}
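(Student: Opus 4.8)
The plan is to argue pointwise. By definition a density on $Y$ is normal iff its value $\delta_y$ at every point lies in $\mathfrak{n}_{m-k}(T_yY)$, so it suffices to show that $(p_*\delta)_y$ is a normal $(m-k)$-density on $W:=T_yY$ for each fixed $y$. First I would reduce to a single decomposable form. Push-forward is additive in $\delta$, and a finite sum of normal densities is normal; choosing a partition of unity on $X$ subordinate to trivializing charts over which $\delta=|\omega|$ with $\omega=\alpha_1\wedge\dots\wedge\alpha_m$ decomposable, and using that the fiber $F_y=p^{-1}(y)$ is compact (so only finitely many terms contribute over $F_y$), I reduce to the case of one such $\omega$ in one chart.

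The key step is a fibrewise computation. Fix $x\in F_y$, let $N_x=\ker dp_x=T_xF_y$ (of dimension $k$) and pick a basis $e_1,\dots,e_k$ of $N_x$. Contract $\omega$ with these vectors to form $\tau_x=\iota_{e_1}\cdots\iota_{e_k}\omega_x\in\bigwedge^{m-k}T_x^*X$. Here I would use the elementary lemma that the interior product of a decomposable form by a vector is again decomposable: if $\alpha_1(e)\neq 0$ one may adjust the basis of $\mathrm{span}(\alpha_i)$ so that $\alpha_1(e)=1$ and $\alpha_i(e)=0$ for $i\ge 2$ without changing $\omega$, whence $\iota_e\omega=\alpha_2\wedge\dots\wedge\alpha_m$ (and $\iota_e\omega=0$ if all $\alpha_i(e)=0$). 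Iterating, $\tau_x$ is decomposable. Since $\iota_{e_j}\tau_x=0$ for every $j$, the form $\tau_x$ annihilates $N_x$, so its factors are pulled back from $(T_xX/N_x)^*\cong W^*$ and $\tau_x$ descends to a decomposable $(m-k)$-form $\bar\tau_x$ on $W$. Unwinding the definitions of $\xi\lrcorner\delta$ and of $p_*$, and taking $e_1,\dots,e_k$ to be a unit frame for a fixed smooth positive fibre density $\sigma$, one finds for any $\eta_1,\dots,\eta_{m-k}\in W$ that the fibre integrand equals $|\bar\tau_x(\eta_1\wedge\dots\wedge\eta_{m-k})|$, so that
\[ (p_*\delta)_y=\int_{F_y}|\bar\tau_x|\,d\sigma(x). \]
The point is that the per-fibre integrand $|\bar\tau_x|$ is the absolute value of a decomposable form, hence a normal $(m-k)$-density; this is exactly where local decomposability of $\delta$ is used, and it would fail for a general smooth $\delta$.

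It then remains to see that fibrewise integration stays inside the space of normal densities. I would represent each $|\bar\tau_x|=\chi_{m-k}(\mu_x)$ by a normal measure $\mu_x$ on $\mathrm{AGr}^{m-k}(W)$ depending continuously on $x$ (the representing measure of $|\gamma_1\wedge\dots\wedge\gamma_{m-k}|$ varies continuously with the $\gamma_i$). Since $F_y$ is compact and the family varies continuously, $\nu:=\int_{F_y}\mu_x\,d\sigma(x)$ is again translation invariant and finite on compact sets, i.e.\ a normal measure, and by linearity and continuity of $\chi_{m-k}$ we obtain $(p_*\delta)_y=\chi_{m-k}(\nu)\in\mathfrak{n}_{m-k}(W)$, which is the claim. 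Equivalently, one may argue that $\int_{F_y}|\bar\tau_x|\,d\sigma$ is a uniform limit on $\mathrm{D}_{m-k}(W)$ of Riemann sums, each a finite non-negative combination of normal densities, and invoke closedness of the space of normal densities.

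I expect the two substantive points to be the decomposability lemma of the second paragraph, which is what forces the per-fibre integrand to be normal and hence makes the whole mechanism work, and the justification in the third paragraph that integration over the compact fibre preserves normality (continuity of $x\mapsto\mu_x$ and finiteness of $\nu$). The reduction via partition of unity and the bookkeeping with lifts and the fibre normalization $\sigma$ are routine.
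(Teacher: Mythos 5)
Your proof is correct and follows essentially the same route as the paper's: contract the locally decomposable form with a frame of the fiber to obtain a decomposable $(m-k)$-form on $T_y(Y)$ (this is the paper's Lemma \ref{decomp2} and the form $\omega_x$), represent its absolute value by a normal measure, and integrate these measures over the compact fiber using continuity in $x$. The one fact you assert parenthetically --- that the representing measure of $\vert \gamma_1 \wedge \ldots \wedge \gamma_{m-k}\vert$ varies continuously with the form --- is precisely the paper's Lemma \ref{cont}, which gets its own proof there; your remaining deviations (partition of unity, the auxiliary fiber density $\sigma$, descending to the quotient $T_x(X)/N_x$ instead of choosing a splitting $S$) are cosmetic.
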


\noindent
{\it Proof of Theorem\,\ref{crofton density}}.
The top-order density $\Phi $ is normal. Its pull-back $\delta = q^*\Phi $
is also normal. Moreover, $\delta $ is a locally decomposable form, and so $p_*\delta = p_*q^*\Phi $
is normal by Prop.\,\ref{normal}. The restriction of $p_*q^*\Phi $ to $M\subset Y$ is normal as the pull-back
of  normal density on $Y$.
\hfill $\square $

\medskip
\noindent
The proof of Prop.\,\ref{normal} requires several simple assertions from linear algebra. 
Let $V$  be a real vector space of dimension $n$ and $V^*$ the dual space.
For a decomposable non-zero $m$-form $\omega = \varphi _1 \wedge  \ldots  
\wedge \varphi_m$, where $\varphi _i \in V^*$, 
the subspace in $V^*$ generated by $\varphi _1, \ldots, \varphi_m$
has dimension $m$. 
Its orthogonal complement in $V$ is called the kernel of $\omega $ and is denoted by
${\rm Ker}\, \omega$. Of course, ${\rm Ker}\, \omega $ is given by the equations $\varphi _1 = \ldots =\varphi _m = 0 $.
A volume form is a non-zero form of maximal degree. A volume form on a given vector space
is unique up to a scalar multiple and decomposable.

\begin{lemma}\label {decomp1} An $m$-form $\omega $ is decomposable if and only if $\omega $ is the pull-back of a volume form $\omega^\prime $ under an epimorphism $\pi : V \to V^\prime$.

\end{lemma}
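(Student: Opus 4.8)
The plan is to prove the two implications separately, starting with the easier direction. Suppose first that $\omega = \pi^*\omega'$ for an epimorphism $\pi: V \to V'$ and a volume form $\omega'$ on $V'$. As recalled just above, a volume form is decomposable, so we may write $\omega' = \psi_1 \wedge \ldots \wedge \psi_m$ with $\psi_i \in (V')^*$ and $m = {\rm dim}\,V'$. Since the pull-back of forms is an algebra homomorphism for the wedge product, $\omega = \pi^*\psi_1 \wedge \ldots \wedge \pi^*\psi_m$, a wedge of $1$-forms on $V$; hence $\omega$ is decomposable.

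For the converse, which carries the real content, let $\omega = \varphi_1 \wedge \ldots \wedge \varphi_m \ne 0$ be decomposable, so that $\varphi_1, \ldots, \varphi_m \in V^*$ are linearly independent. I would set $K = {\rm Ker}\,\omega$, the common zero locus of the $\varphi_i$, take $V' = V/K$ with quotient map $\pi: V \to V'$ (an epimorphism by construction), and observe that each $\varphi_i$ vanishes on $K$ and therefore descends to a functional $\bar\varphi_i$ on $V'$ with $\varphi_i = \pi^*\bar\varphi_i$.

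It then remains to check that $\omega' = \bar\varphi_1 \wedge \ldots \wedge \bar\varphi_m$ is genuinely a volume form on $V'$ and that it pulls back to $\omega$. Since the span of the $\varphi_i$ in $V^*$ has dimension $m$ and coincides with the annihilator of $K$, we get ${\rm dim}\,V' = m$ and the $\bar\varphi_i$ form a basis of $(V')^*$; hence $\omega'$ is a nonzero form of top degree $m = {\rm dim}\,V'$, i.e., a volume form. Finally, using again that pull-back commutes with the wedge product, $\pi^*\omega' = \pi^*\bar\varphi_1 \wedge \ldots \wedge \pi^*\bar\varphi_m = \varphi_1 \wedge \ldots \wedge \varphi_m = \omega$.

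There is essentially no serious obstacle here: the argument is pure linear algebra, and the only points that need care are the well-definedness and the linear independence of the descended functionals $\bar\varphi_i$, both of which follow immediately from the description of $K = {\rm Ker}\,\omega$ as the annihilator of the span of the $\varphi_i$. If one wished to admit $\omega = 0$ in the statement, that case would be vacuous, since $\pi^*$ is injective on top-degree forms for a surjective $\pi$ and hence no volume form pulls back to $0$; so the equivalence is to be read for nonzero $\omega$, consistent with the definition of the kernel given just before the lemma.
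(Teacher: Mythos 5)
Your proof is correct and follows essentially the same route as the paper's: quotient $V$ by ${\rm Ker}\,\omega$, descend the linearly independent $1$-forms $\varphi_i$ to the quotient, and observe that their wedge is a volume form pulling back to $\omega$. The only differences are cosmetic --- you spell out the converse direction that the paper dismisses as obvious, and you note explicitly that the statement should be read for $\omega \ne 0$.
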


\begin{proof} Let $\omega $ be a decomposable $m$-form, $\omega = \varphi _1 \wedge \ldots \wedge \varphi _m$, 
and $V^\prime =V/{\rm Ker\, \omega} $. Denote by $\pi : V \to  V^\prime $ the canonical map. Then each $\varphi _i$ is the pull-back of
some $\varphi _i^\prime $, where $\varphi_1^\prime , \ldots,  
\varphi_m  ^\prime$ are linearly independent 1-forms on $V^\prime $. The wedge product $\omega ^\prime = \varphi _1^\prime \wedge
\ldots \wedge \varphi_m^\prime$ is a volume form on $V^\prime $ and its pull-back $\pi ^*\omega ^\prime $ is the initial form $\omega $.
The converse assertion is obvious.
\end{proof}

\noindent
For any $\omega \in \bigwedge ^m V^*$ and $\xi = (\xi _1, \ldots, \xi _k ),\ \xi_i \in V,\ k\le m$, we have the inner product 
$\xi \lrcorner \, \omega \in \bigwedge ^{m-k} V^*$, given by
$(\xi\lrcorner \, \omega) (v_1, \ldots, v_{m-k}) = \omega(\xi_1, \ldots, \xi_k, v_1, \ldots ,v_{m-k})$.

\begin{lemma}\label{decomp2} If $\omega \ne 0$ is decomposable then $\xi \lrcorner \, \omega $ is also 
decomposable. If $\xi _1, \ldots, \xi _k $ are linearly independent modulo ${\rm Ker}\, \omega$
then 
$${\rm Ker}\, \xi \lrcorner \, \omega = {\rm Ker}\, \omega +{\mathbb R}\, \xi _1 + \ldots + {\mathbb R}\, \xi_k. $$
Otherwise $\xi\lrcorner\, \omega = 0 $.
\end{lemma}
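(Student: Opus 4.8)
The plan is to reduce the statement to the case of a volume form by factoring $\omega$ through the quotient by its kernel, and then to compute directly. By Lemma \ref{decomp1}, write $\omega = \pi^*\omega'$, where $\pi : V \to V' = V/{\rm Ker}\,\omega$ is the canonical projection and $\omega'$ is a volume form on the $m$-dimensional space $V'$. Set $\eta_i = \pi(\xi_i)$ for $i = 1, \ldots, k$ and $\eta = (\eta_1, \ldots, \eta_k)$. The first step is the naturality identity $\xi \lrcorner\, \pi^*\omega' = \pi^*(\eta\lrcorner\,\omega')$; this follows at once from the definitions of inner product and pull-back, since for $v_1, \ldots, v_{m-k}\in V$ both sides equal $\omega'(\eta_1, \ldots, \eta_k, \pi v_1, \ldots, \pi v_{m-k})$. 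Thus everything is governed by the contraction $\eta\lrcorner\,\omega'$ of a volume form.

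Second, I would dispose of the degenerate case. Since $\xi \lrcorner\,\omega$ is alternating and multilinear in $\xi_1, \ldots, \xi_k$, so is $\eta\lrcorner\,\omega'$ in $\eta_1, \ldots, \eta_k$; hence if $\eta_1, \ldots, \eta_k$ are linearly dependent---equivalently, if $\xi_1, \ldots, \xi_k$ are linearly dependent modulo ${\rm Ker}\,\omega$---the contraction vanishes and $\xi\lrcorner\,\omega = \pi^*(0) = 0$. Otherwise $\eta_1, \ldots, \eta_k$ are linearly independent in $V'$; I extend them to a basis $\eta_1, \ldots, \eta_m$ with dual basis $\psi_1, \ldots, \psi_m$, so that $\omega' = c\,\psi_1\wedge\ldots\wedge\psi_m$ for some $c \ne 0$. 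A direct computation of the iterated inner product then shows that $\eta\lrcorner\,\omega'$ is a nonzero multiple of $\psi_{k+1}\wedge\ldots\wedge\psi_m$, a decomposable $(m-k)$-form whose kernel, being the common zero set of $\psi_{k+1}, \ldots, \psi_m$, is exactly $\mathbb{R}\eta_1 + \ldots + \mathbb{R}\eta_k$.

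Finally I would pull back. The form $\xi\lrcorner\,\omega = \pi^*(\eta\lrcorner\,\omega')$ is a pull-back of a decomposable form, hence itself decomposable, which proves the first assertion. For the kernel, write $\eta\lrcorner\,\omega'$ as a product of the $1$-forms $\psi_j$; then $\pi^*(\eta\lrcorner\,\omega')$ is the product of the $\psi_j\circ\pi$, and its kernel is the common zero set of these, namely $\pi^{-1}(\mathbb{R}\eta_1 + \ldots + \mathbb{R}\eta_k)$. A dimension count finishes the proof: the subspace ${\rm Ker}\,\omega + \mathbb{R}\xi_1 + \ldots + \mathbb{R}\xi_k$ contains ${\rm Ker}\,\pi = {\rm Ker}\,\omega$, maps onto $\mathbb{R}\eta_1 + \ldots + \mathbb{R}\eta_k$ under $\pi$, and has dimension $(n - m) + k$ by the independence hypothesis, which equals $\dim \pi^{-1}(\mathbb{R}\eta_1 + \ldots + \mathbb{R}\eta_k)$; since the former is contained in the latter and the dimensions agree, they coincide.

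The argument is essentially bookkeeping, so I do not expect a serious obstacle; the one point requiring care is the kernel identity in the last step, where one must use the paper's definition of ${\rm Ker}$ (the annihilator of the span of the defining covectors) and match dimensions correctly, rather than attempt to verify the two inclusions by hand.
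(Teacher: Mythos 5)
Your proposal is correct and follows essentially the same route as the paper: reduce to a volume form via Lemma \ref{decomp1}, use the naturality identity $\xi\lrcorner\,\pi^*\omega' = \pi^*\bigl(\pi(\xi)\lrcorner\,\omega'\bigr)$, and then analyze the contraction of the volume form, with the linearly dependent case handled by alternation. The only difference is cosmetic: where the paper observes that $\pi(\xi)\lrcorner\,\omega'$ is the pull-back of a volume form on the quotient $V'/U$, you carry out an explicit dual-basis computation and a dimension count, which fills in the same details by hand.
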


\begin{proof} Let $\omega = \pi^*\omega ^\prime$ be the pull-back of $\omega ^\prime \in \bigwedge ^m V^\prime$ under an epimorphism $\pi: V \to V^\prime$.
Write $\pi (\xi)$ for $(\pi (\xi_1),\ldots,\pi(\xi_k))$. 
Then $\xi\lrcorner \,\omega = \pi^*(\pi(\xi)\lrcorner\, \omega ^\prime)$ and 
${\rm Ker}\, \xi \lrcorner \, \omega  = \pi^{-1} \bigl ({\rm Ker}\, \pi(\xi) \lrcorner \, \omega ^\prime \bigr )$.
By Lemma \ref{decomp1} we may assume that  $\omega ^\prime $ is a volume form on $V^\prime$. 
If $\pi(\xi_1), \ldots, \pi (\xi_k)$ are
linearly independent then $\pi (\xi)\lrcorner \,\omega ^\prime$ is the pull-back of a volume form on $V^\prime /U $, where
$U = {\mathbb R}\pi (\xi_1) +\ldots + {\mathbb R}\pi (\xi_k).$  Thus $\xi \lrcorner\, \omega$ is decomposable with kernel  ${\rm Ker}\, \omega +{\mathbb R}\xi _1 + \ldots +{\mathbb R}\xi _k$. Otherwise 
$\pi (\xi) \lrcorner \, \omega ^\prime= 0 $ and $\xi \lrcorner \, \omega = 0 $.
\end{proof}

\noindent
Suppose $\omega \ne 0$ is a decomposable $m$-form on $V$, where $1\le m \le n $. We associate with $\omega  $ a
normal measure
$\mu_\omega \in {\mathfrak m}_m(V)$. Namely, by Lemma \ref{decomp1}  we have $\omega = \pi^*\omega ^\prime $, where
$\pi : V \to V^\prime = V/{\rm Ker}\, \omega$  is the canonical quotient map and
$\omega ^\prime $ is a volume form on $ V^\prime$.   Let $\mu ^\prime $ be the corresponding Lebesgue measure on $V^\prime $, so that
$\chi _m(\mu ^\prime ) = \vert \omega ^\prime \vert$.
Applying
the pull-back operation for normal measures, we put
 $\mu  _\omega= \pi ^*\mu ^\prime \in {\mathfrak m}_m(V)$. 
Then
$$\chi _m (\mu _\omega) = \chi _m (\pi ^*\mu ^\prime)  = \pi^*(\chi _m \mu^\prime) = \pi^*(\vert\omega ^\prime \vert) = 
\vert \omega \vert ,$$  
showing that $\vert \omega \vert$ is a normal $m$-density.
For $\omega = 0$ we put $\mu _0 = 0$.

\begin{lemma} \label{cont}The map
$${\rm D}_{m} (V^*) \ni \omega \mapsto \mu_\omega \in {\mathfrak m}_m$$
is continuous.
\end{lemma}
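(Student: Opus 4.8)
The plan is to make the measure $\mu_\omega$ completely explicit on the cone ${\rm D}_m(V^*)\setminus\{0\}$ and then to treat the point $\omega=0$ separately by a uniform mass bound. I take the vague topology on ${\mathfrak m}_m$, i.e.\ convergence tested against functions $f\in C_c({\rm AGr}^m(V))$; this is the notion needed in Prop.\,\ref{normal}, where one integrates the family $x\mapsto\mu_{\omega(x)}$ over a compact fibre. Fix once and for all a Euclidean structure on $V$. For $\omega\ne 0$ write $K_\omega={\rm Ker}\,\omega$, let $P_\omega=K_\omega^\bot$ be its orthogonal complement (an $m$-dimensional subspace), and let $\pi_\omega:V\to V/K_\omega$ be the canonical map, so that $\mu_\omega=\pi_\omega^*\lambda_\omega$ with $\lambda_\omega$ the Lebesgue measure on $V/K_\omega$ determined by $\omega$. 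By the construction preceding the lemma, $\mu_\omega$ is supported on the affine subspaces parallel to $K_\omega$; parametrising these by $P_\omega$ via $v\mapsto v+K_\omega$ and transporting $\lambda_\omega$, one obtains
\[
\int f\,d\mu_\omega = c(\omega)\int_{P_\omega} f(v+K_\omega)\,d{\rm vol}_{P_\omega}(v),\qquad f\in C_c({\rm AGr}^m(V)),
\]
where $c(\omega)>0$ is the value of the density $|\omega|$ on a Euclidean-orthonormal frame of $P_\omega$, i.e.\ the factor by which the transported $\lambda_\omega$ exceeds ${\rm vol}_{P_\omega}$.

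The first step is to record that each ingredient depends continuously on $\omega$ away from the origin. The factor $c(\omega)$ is continuous and is bounded above by $C\|\omega\|$ for a constant $C$ depending only on the metric, so in particular $c(\omega)\to 0$ as $\omega\to 0$. The subspace $K_\omega$ depends continuously on $\omega\in{\rm D}_m(V^*)\setminus\{0\}$: the $m$-plane ${\rm span}(\varphi_1,\dots,\varphi_m)\subset V^*$ is recovered continuously from the Plücker line $[\omega]$, and $K_\omega$ is its annihilator, whence $P_\omega$ is continuous as well. Pulling the integral back to a fixed copy of ${\mathbb R}^m$ by a continuously varying isometry onto $P_\omega$ and applying dominated convergence (using that $f$ is compactly supported) shows that $\omega\mapsto\int_{P_\omega}f(v+K_\omega)\,d{\rm vol}(v)$ is continuous on ${\rm D}_m(V^*)\setminus\{0\}$. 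Multiplying by $c(\omega)$ gives vague continuity of $\omega\mapsto\mu_\omega$ there.

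It remains to check continuity at $\omega=0$, and this is the only genuinely delicate point, because $K_\omega$ need not converge as $\omega\to 0$, so the supports of the $\mu_\omega$ may keep changing direction and no limiting direction is available. The remedy is a uniform mass bound. Since ${\rm supp}\,f$ is compact in ${\rm AGr}^m(V)$, the distance-to-the-origin function is bounded on it by some $R$; hence for every direction $K_\omega$ the set $\{v\in P_\omega : v+K_\omega\in{\rm supp}\,f\}$ lies in the ball of radius $R$ in $P_\omega$, of volume at most ${\rm v}_mR^m$ independently of $\omega$. Therefore
\[
\Bigl|\int f\,d\mu_\omega\Bigr|\le c(\omega)\,\|f\|_\infty\,{\rm v}_mR^m\le C\,\|\omega\|\,\|f\|_\infty\,{\rm v}_mR^m\longrightarrow 0=\int f\,d\mu_0
\]
as $\omega\to 0$. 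Together with the previous paragraph this proves that $\omega\mapsto\mu_\omega$ is continuous on all of ${\rm D}_m(V^*)$. The main obstacle is precisely this behaviour near $\omega=0$: it cannot be read off from the continuity of $\chi_m(\mu_\omega)=|\omega|$, because $\chi_m$ has a kernel, but it follows once the mass is controlled linearly in $\|\omega\|$ uniformly in the otherwise discontinuous direction of the support.
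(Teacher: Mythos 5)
Your proof is correct, and it handles the one genuinely delicate point (the origin of the cone, where the kernel direction has no limit) by a different mechanism than the paper. Both arguments rest on the same explicit picture: $\mu_\omega$ is a Lebesgue-type measure on the family of translates of ${\rm Ker}\,\omega$, and continuity is tested vaguely against $f\in C_c({\rm AGr}^m(V))$, using that a compact set of affine subspaces has bounded distance to the origin. The difference is in how the degeneration at $\omega=0$ is tamed. The paper takes a sequence $\omega_i\to\omega$, uses compactness of ${\rm Gr}_{n-m}(V)$ to pass to a subsequence along which the kernels $L_i$ converge to \emph{some} subspace $L$ (equal to ${\rm Ker}\,\omega$ only when $\omega\ne 0$), fixes one complement $M$ to $L$, and reduces everything to the convergence $\nu_i\to\nu$ of the volume measures $\omega_i\vert_M\to\omega\vert_M$ on $M$, with oblique projections parallel to $L_i$; this treats $\omega=0$ and $\omega\ne 0$ uniformly, since in the degenerate case $\nu=0$ whatever $L$ turned out to be. You instead split into two cases: away from the origin you get continuity of $K_\omega$ for free from the Pl\"ucker embedding and work with the orthogonal complements $P_\omega$, and at the origin you replace the compactness/subsequence trick by the quantitative bound $\bigl|\int f\,d\mu_\omega\bigr|\le C\,\|\omega\|\,\|f\|_\infty\,{\rm v}_mR^m$, uniform in the direction of $K_\omega$. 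The paper's route avoids any case distinction and any appeal to Pl\"ucker continuity; yours avoids subsequence extraction altogether and isolates, in the form of an explicit linear-in-$\|\omega\|$ mass estimate, the precise reason why continuity survives at the point where the support directions oscillate. Two minor points worth polishing: the isometries $\mathbb{R}^m\to P_\omega$ can only be chosen continuously locally in $\omega$ (the frame bundle over the Grassmannian need not be trivial), which suffices since continuity is local; and your constant $c(\omega)$ is in fact exactly the comass of $\omega$, so its continuity and the bound $c(\omega)\le C\|\omega\|$ need no separate argument.
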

\begin{proof} Let $\omega _i \to \omega $ as $i \to \infty $. We want to prove that $\mu _{\omega _i} \to \mu_\omega $.
Put $L_i = {\rm Ker}\, \omega _i$. By compactness of ${\rm Gr}_{n-m}(V)$ we may assume that $L_i \to L$,
where $L$ is some vector subspace of codimension $m$ in $V$. Note that if $\omega \ne 0$ then $L = {\rm Ker}\, \omega $. 
Take any subspace $M \subset V$, such that $M \oplus L = V$. Then $M\oplus L_i = V$ for large $i$, so that we
have the projection maps $\pi $ and $ \pi _i$ of $V$ to $M$ parallel to $L$ and, respectively, $L_i$.
Consider the restrictions of $\omega $ and $\omega _i$ onto $M$.
These restrictions are top-order forms on $M$,
so let $\nu  $ and $\nu _i $ be the corresponding volume measures
on $M$.
Take any continuous function $f$ with compact support on ${\rm AGr}^m(V)$ and define the functions
$\tilde f_i, \tilde f$ on $M$ by
$$ \tilde f_i(m) = f(m+L_i), \tilde f(m) = f(m+L), \ m\in M.$$
Observe that $\tilde f_i$ and $\tilde f$ have compact supports bounded altogether and $\tilde f_i \to \tilde f$.
Since $\omega _ i \vert _M\to \omega \vert _M$, we have
 $\nu _i \to \nu $. Therefore
$$\int _{{\rm AGr}^m (V)}  f\cdot d\mu_{\omega_i} = \int _M \tilde f_i \cdot d\nu_i \to
\int _M \tilde f \cdot d\nu = \int _{{\rm AGr}^m (V)}  f\cdot d\mu _\omega $$
by the definiton of pull-back for normal measures applied to 
$\mu _\omega= \pi^*\nu $ and $\mu _{\omega_i}= \pi^*_i \nu_i$.
\end{proof}

\noindent
{\it Proof of Prop.\,\ref{normal}.}
Fix a point $x \in X$ and write $\delta = \vert \omega \vert$ in a neighborhood of $x$,
where $\omega $ is a decomposable $m$-form in that neighborhood. Put $V = T_x(X), W = T_y(Y)$, where $y = p(x)$, and $L = T_x(F_y)$.
Choose a complementary vector subspace $S$ to $L$ in $V$, so that $dp_x\vert _S $ is an isomorphism
 $S \buildrel \sim \over \to W$. By Lemma \ref{decomp2}, $f\lrcorner\, \omega $
 is a decomposable $(m-k)$-form on $V$ for any $k$-tuple $f=(f_1, \ldots, f_k)$, $f_i \in V$. Restrict this form to $S$ and then carry it down to $W$
using the isomorphism $S\simeq W$. For $f_1, \ldots, f_k \in L$ the resulting 
form does not depend on the choice of $S$. We call this form $\omega _x$. By construction, $\omega _x$
is a decomposable $(m-k)$-form on $W$ and
$$\omega _x (\eta _1, \ldots, \eta _{m-k}) = (f \lrcorner \, \omega )(\xi_1, \ldots, \xi_{m-k}),$$
where
$\xi_1, \ldots, \xi_{m-k} \in S$ are uniquely determined by $dp_x(\xi_i) =\eta_i$.
Let $\mu _x = \mu_{\omega _x} \in {\mathfrak m}_{m-k}(W)$ be the distinguished normal measure 
associated with $\omega _x$, such that $\chi_ {m-k}(\mu _x )= \vert\omega _x\vert = \delta _x$. When $x$ varies along the fiber $F_y$, the family
of forms $\{\omega _x\}$ is continuous. By Lemma \ref{cont} the family of measures $\{{\mu}_x\} $ is also
 continuous. Since $\mu _x$ and $\delta _x$ depend on $f_1\wedge \ldots \wedge f_k \in \bigwedge^kT_x(F_y)$ as densities, we can define a normal measure by the 
integral 
$
\mu = \int _{F_y}\mu_x$. Then
$$\chi _{m-k}(\mu ) (\eta _1 \wedge \ldots \wedge \eta _{m-k}) = \int _{F_y}\chi_{m-k}(\mu_x)(\eta _1\wedge \ldots
\wedge  \eta _{m-k})=$$
$$= \int _{F_y}\delta_x (\xi_1, \ldots, \xi_{m-k})=
\int _{F_y} \xi\lrcorner \, \delta _x . 
$$
By the definition of push-forward this equals $p_*\delta (\eta _1, \ldots, \eta _{m-k}).$
\hfill$\square $
\section{Product theorem}\label{prod}

For the proof of Theorem\,\ref {crofton product} below in this section we will need the following fact (Prop.\,\ref {product}).
Consider the product of two manifolds $X=X_1 \times X_2$
and use the same notation for differential forms on $X_1, X_2$ and their liftings to $ X$.
The same convention holds for densities.
Let $\delta _1, \delta _2$ be two
locally decomposable densities on $X_1$ and, respectively, $X_2$. As we know, $\delta _1, \delta _2$ are normal. 
Let $\delta _1 \in {\mathfrak n}_{m_1}(X_1), \delta _2\in {\mathfrak n}_{m_2}(X_2)$, $m= m_1 + m_2$, and
$\delta = \delta _1 \delta _2 \in {\mathfrak n}_m(X)$. Suppose $p_1:X_1 \to Y_1,\ p_2 : X_2 \to Y_2$ 
are two fiberings whose fibers are compact and have dimensions $k_1\le m_1$, $k_2 \le m_2$.
By Prop.\,\ref{normal} the push-forwards of $\delta _1, \delta _2$ and $\delta $
are normal densities.
\begin{proposition}\label{product}

One has $$(p_1\times p_2)_* (\delta) = p_{1*}(\delta _1)\cdot p_{2*}(\delta _2)$$
in the ring ${\mathfrak n}(Y_1\times Y_2)$.
\end{proposition}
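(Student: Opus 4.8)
The plan is to reduce the product formula for push-forwards to the functoriality statements already established in the Preliminaries, namely that pull-back of normal measures commutes with $\chi_m$ and that the product of pull-backs is the pull-back of the product (the displayed identity $\chi_p(\varphi^*\mu)\cdot\chi_q(\varphi^*\nu)=\chi_{p+q}(\varphi^*(\mu\cdot\nu))$). Because all the densities in sight are locally decomposable, I would work pointwise and carry along the distinguished normal measures $\mu_{\omega}$ constructed in Section~\ref{decomp}. The key observation is that both sides of the claimed equation are normal $(m-k_1-k_2)$-densities on $Y_1\times Y_2$, so it suffices to check that they agree as functions on decomposable multivectors, and for that I can use the explicit fiber-integral description of $p_*$ together with the fact that, on a product, a decomposable form splits as $\omega=\omega_1\wedge\omega_2$ with $\omega_i$ decomposable on $X_i$.

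First I would fix a point $y=(y_1,y_2)\in Y_1\times Y_2$ and tangent vectors $\eta_1,\dots,\eta_{m-k_1-k_2}\in T_y(Y_1\times Y_2)$, and unpack the definition of $(p_1\times p_2)_*$: the fiber of $p_1\times p_2$ over $y$ is the product $F_{y_1}\times F_{y_2}$ of the individual fibers, so the fiber integral defining the left-hand side is an integral over this product. Writing $\delta=\delta_1\delta_2=|\omega_1\wedge\omega_2|$ locally, I would apply Lemma~\ref{decomp2} to compute the inner product $\xi\lrcorner\,(\omega_1\wedge\omega_2)$ by a tangent frame to $F_{y_1}\times F_{y_2}$; since the tangent space to the product fiber splits as $T(F_{y_1})\oplus T(F_{y_2})$ and $\omega_i$ lives on the $X_i$ factor, the contraction separates into a contraction of $\omega_1$ along $T(F_{y_1})$ and of $\omega_2$ along $T(F_{y_2})$. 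This is the concrete incarnation of the splitting, and it lets me invoke Fubini to write the fiber integral over $F_{y_1}\times F_{y_2}$ as an iterated integral.

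The main step is then to match this with the product of the two push-forwards. The density $p_{1*}(\delta_1)\cdot p_{2*}(\delta_2)$ at $y$ is the product, in the ring ${\mathfrak n}(T_{y_1}Y_1\oplus T_{y_2}Y_2)$, of the normal densities obtained from the measures $\int_{F_{y_1}}\mu_{x_1}$ and $\int_{F_{y_2}}\mu_{x_2}$ that appear in the proof of Prop.~\ref{normal}. Here I would use that $\chi$ is multiplicative on the relevant measures, together with the fact that for a form $\omega_1\wedge\omega_2$ that is a product of forms pulled back from the two factors one has $|\omega_1\wedge\omega_2|=|\omega_1|\cdot|\omega_2|$ as normal densities (stated in Section~\ref{preliminaries} for decomposable forms). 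Pulling the product past the fiber integrations via the linearity of $\mu\mapsto\int_{F}\mu_x$ and the continuity/bilinearity of the product of measures, I expect the iterated integral from the previous paragraph to reproduce exactly $\chi(\mu^{(1)}\cdot\mu^{(2)})$ evaluated on $\eta_1\wedge\dots\wedge\eta_{m-k_1-k_2}$, which is the right-hand side.

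The hard part will be justifying the interchange of the product of normal measures with the two fiber integrations: the product $\mu\cdot\nu$ of normal measures is defined through the intersection map $P_{p,q}$ and is not an obviously continuous bilinear operation one may freely pass under an integral sign. I would handle this by reducing to the pointwise distinguished measures $\mu_{\omega_x}$, for which Lemma~\ref{cont} gives continuity in $\omega$, and by using the multiplicativity identity $\chi_p(\varphi^*\mu)\cdot\chi_q(\varphi^*\nu)=\chi_{p+q}(\varphi^*(\mu\cdot\nu))$ to transfer the product to the level of densities, where integration over the (compact) fiber is unproblematic. A secondary technical point is checking that the splitting $T_x(F_{y_1}\times F_{y_2})=T_{x_1}(F_{y_1})\oplus T_{x_2}(F_{y_2})$ is respected by the choice of complementary subspaces $S$ in the proof of Prop.~\ref{normal}, i.e.\ that one may choose $S=S_1\oplus S_2$ compatibly; this makes the two contractions genuinely independent and is what ultimately allows Fubini to apply.
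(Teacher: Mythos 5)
Your skeleton matches the paper's proof: work pointwise with the distinguished measures $\mu_{x_i}$ constructed in the proof of Prop.~\ref{normal}, use the splitting of the fiber of $p_1\times p_2$ as $F_{y_1}\times F_{y_2}$, integrate the measure-valued families over the fibers, and apply $\chi$ at the end. But there is a genuine gap at exactly the step you flag as hard, and the tools you propose do not close it. The interchange of the normal-measure product with the two fiber integrations cannot be obtained from Lemma~\ref{cont} together with the identity $\chi_p(\varphi^*\mu)\cdot\chi_q(\varphi^*\nu)=\chi_{p+q}(\varphi^*(\mu\cdot\nu))$: that identity concerns the pull-back of two measures on the \emph{same} space under one linear map, and says nothing about commuting the product $\mu\cdot\nu$ (defined through the intersection map $P_{p,q}$) with an integral of a family of measures. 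Your fallback --- ``transfer the product to the level of densities, where integration over the fiber is unproblematic'' --- is circular: the proposition to be proved \emph{is} the statement that the density-level product commutes with push-forward. Likewise, the proposed Fubini reduction does not work at the level of densities: $\delta=\vert\omega_1\wedge\omega_2\vert$ evaluated on a mixed multivector $\xi\wedge f^{(1)}\wedge f^{(2)}$, with the lifts $\xi_i$ not adapted to the splitting, does not factor as (a function of $f^{(1)}$)$\,\times\,$(a function of $f^{(2)}$), so the fiber integral is not an iterated integral of a product; and the right-hand side $p_{1*}(\delta_1)\cdot p_{2*}(\delta_2)$ is not a pointwise product of functions either, being defined only through the measure ring.

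The missing idea, which is the heart of the paper's argument, is this observation. Identify ${\rm AGr}^p(V_1)$ and ${\rm AGr}^q(V_2)$ with their images in ${\rm AGr}^p(V)$, ${\rm AGr}^q(V)$, where $V=V_1\oplus V_2$, via $G\mapsto G\oplus V_2$ and $H\mapsto V_1\oplus H$. Then $({\rm AGr}^p(V_1)\times{\rm AGr}^q(V_2))\cap{\mathcal D}_{p,q}=\emptyset$ and $P_{p,q}$ restricted there is an embedding, so for measures $\mu,\nu$ carried by these two images the ring product coincides with the direct product: $\mu\cdot\nu=\mu\times\nu$. Applied pointwise to $\mu_{x_1}$, $\mu_{x_2}$, which sit on the two complementary summands of $T_{y_1}(Y_1)\oplus T_{y_2}(Y_2)$, this converts the problematic bilinear operation into an honest direct product of measures, for which the interchange is classical Fubini:
$$\int_{F_{y_1}\times F_{y_2}}\mu_{x_1}\times\mu_{x_2}=\int_{F_{y_1}}\mu_{x_1}\ \times\int_{F_{y_2}}\mu_{x_2}.$$
Applying the same observation once more to the right-hand side (the two integrated measures again sit on complementary summands), then applying $\chi_{m-k}$ and evaluating on $\eta_1\wedge\eta_2$ as in the proof of Prop.~\ref{normal}, yields the proposition. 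Your secondary point about choosing the complement $S=S_1\oplus S_2$ compatibly with the splitting is correct, but it is the easy part; without the direct-summand observation the proof does not go through.
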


\medskip
\noindent
{\it Proof of Theorem\,\ref{crofton product}.} For two Crofton data $\{Y_1,\Gamma_1,\mu_1\}$ and $\{Y_2, \Gamma _2, \mu _2\}$ let  $p_i: X_i \to Y_i , q_i: X_i \to \Gamma _i , i=1,2,$ be the corresponding locally trivial fiber bundles. The measures
$\mu _1, \mu_2$ are given by positive top-order smooth densities $\Phi _1 $ on $\Gamma _1$ and $
\Phi _2$ on $\Gamma _2$. Let $\Phi $ be their product on $\Gamma = \Gamma_1\times \Gamma _2$,
$\delta _i = q_i^*\Phi _i, i=1,2,$ and $\delta = (q_1\times q_2)^*(\Phi )$. 
Then 
$$\Omega _{Y_1\times Y_2, \Gamma _1 \times \Gamma _2, \mu_1 \times \mu_2}=
(p_1\times p_2)_*(\delta) = p_{1*}(\delta _1) \cdot p_{2*}(\delta _2)=$$
$$=
\pi_1^*\Omega _{Y_1,\Gamma _1, \mu_1}\cdot \pi_2^*\Omega _{Y_2,\Gamma_2,\mu_2}$$
by Prop.\,\ref{product}. The ring of normal densities depends contravariantly on a manifold, see 
Sect.\,\ref{preliminaries}. Thus the restriction to $M$ gives rise to a homomorphism of the rings of normal densities,
proving Theorem\,\ref{crofton product}.
\hfill $\square $

\medskip
\noindent
The product of normal measures $\mu $ on ${\rm AGr}^p(V)$ and $\nu $ on ${\rm AGr}^q(V)$ is a normal
measure $\mu \cdot \nu $ on ${\rm AGr}^{p+q}(V)$, see Sect.\ref{preliminaries}. 
We will apply this definition to $V= V_1 \oplus V_2$ and $\mu$, $\nu$
coming from $V_1$, $V_2$, respectively. More precisely, we 
consider the embeddings
$$\iota _1:{\rm AGr}^p(V_1) \to {\rm AGr}^p(V),\ \ G\mapsto G\oplus V_2,$$
$$\iota _2:{\rm AGr}^q(V_2) \to {\rm AGr}^q(V),\ \  H \mapsto V_1 \oplus H,$$
and identify ${\rm AGr}^p(V_1), {\rm AGr}^q(V_2)$ with their images under $\iota _1, \iota _2$.
For $\mu \in {\mathfrak m_p}(V_1)$ and $\nu \in {\mathfrak m_q}(V_2)$ we put 
$$\mu \cdot \nu = \iota _{1,*}(\mu )
\cdot \iota _{2,*}(\nu ) \in {\mathfrak m}_{p+q}(V).$$
Since $({\rm AGr}^p(V_1)\times{\rm AGr}^q(V_2))\cap{\mathcal D}_{p,q} = \emptyset $
and $P_{p,q} \cdot (\iota _1 \times \iota _2)$ is an embedding, we have $\mu \cdot \nu = \mu \times \nu $.

\medskip
\noindent
{\it Proof of Prop.\,\ref{product}}. Let $x_1 \in X_1, x_2 \in X_2$. As in the proof of 
Prop.\,\ref{normal}, construct the normal measures $\mu _{x_1}$ and $\mu _{x_2}$, such that
$\chi_{m_i-k_i}(\mu_{x_i}) = \delta _{x_i}, \ i=1,2.$ Then $\chi _{m-k}(\mu_{x_1}\cdot\mu_{x_2}) = \delta _{x_1}\cdot \delta
_{x_2} = \delta _x$. Now, $\mu _{x_1}$ and $\mu _{x_2}$ come from two different direct summands
of the decomposition $T_x(X) = T_{x_1}(X_1) \oplus T_{x_2}(X_2)$. Therefore, using the above observation, we
can replace the product of measures by the direct product, i.e.,
$\chi_{m -k}(\mu_{x_1}\times \mu _{x_2}) = \delta _x$.
Recall that $\mu _{x_1}$ and $\mu _{x_2}$ behave
as densities when $x_1$ and $x_2$ vary along the fibers $F_{y_1}=p_1^{-1}(y_1)$ and $F_{y_2} = 
p_1^{-1}(y_2)$, where $y_i 
=p_i(x_i)$.
We have
$$ \int _{F_{y_1}\times F_{y_2}} \mu_{x_1}\times\mu_{x_2}= \int _{F_{y_1}} \mu_{x_1} \times \int_{F_{y_2}}\mu_{x_2}.
$$
Take two decomposable covectors
$\eta _1 \in \bigwedge^{m_1-k_1}(T_{y_1}(Y_1))$ and
$\eta _2 \in \bigwedge^{m_2-k_2}(T_{y_2}(Y_2))$, apply $\chi $
and evaluate the both sides of the equality on $\eta_1\wedge\eta_2$ as in the proof of  Prop.\,\ref{normal}. This yields the result.
\hfill $\square $

\section{Mixed Riemannian volume}\label{mixedriemann}

In this section, we give the definition of the mixed Riemannian volume and prove Theorem\,\ref{crofton mixed}.
Let $g$ be a non-negative quadratic form on (the tangent bundle of) a manifold $X$.
Denote by $g_x$ the restriction of $g$ to $T_x(X)$.

\begin{definition}\label{vol}
For $\xi = \xi _1 \wedge \ldots \wedge \xi _k \in \bigwedge^kT_x(X)$ let ${\rm vol}_{k,g}(\xi)$ be
the $k$-dimensional $g$-volume of the parallelotope generated by $\xi_i$, i.e., the Euclidean volume of 
the parallelotope generated by the images of $\xi _i$ in $T_x(X)/{\rm Ker}\, g_x$.  
\end{definition} 

\noindent
The function $\sqrt {g_x}$
is convex and positively homogeneous of degree 1. Consider the convex body ${\mathcal T}_g(x)
\subset T_x^*(X)$ with support function $\sqrt {g_x}$. The Finsler convex body
${\mathcal T}_g= \{{\mathcal T}_g(x)\}$ consists of ellipsoids ${\mathcal T}_g(x)$ in the orthogonal complement
to the kernel of $g_x$.

\begin{lemma}\label{formula} $D_k({\mathcal T}_g) = {\rm v}_k\cdot {\rm vol}_{k,g}$, where ${\rm v}_k$
is the volume of the $k$-dimensional unit ball.
\end{lemma}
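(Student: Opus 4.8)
The plan is to reduce the statement to a pointwise identity in linear algebra and then to a single computation of the volume of an ellipsoid. Since both $D_k(\mathcal{T}_g)$ and ${\rm vol}_{k,g}$ are $k$-densities defined pointwise, it suffices to fix $x\in X$ and prove that $d_k(\mathcal{T}_g(x),\ldots,\mathcal{T}_g(x))(\xi_1,\ldots,\xi_k)={\rm v}_k\cdot{\rm vol}_{k,g_x}(\xi_1\wedge\ldots\wedge\xi_k)$ for all $\xi_1,\ldots,\xi_k\in V:=T_x(X)$. Write $g=g_x$ and $\mathcal{T}=\mathcal{T}_g(x)\subset V^*$; by construction the support function of $\mathcal{T}$ is $\sqrt{g}$. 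Let $H\subset V$ be the span of the $\xi_i$. Because all bodies entering the mixed volume coincide, Definition \ref{d_m} gives $d_k(\mathcal{T},\ldots,\mathcal{T})(\xi_1,\ldots,\xi_k)={\rm vol}_k(\pi_H\mathcal{T})$, the $k$-volume of the projected body $\pi_H\mathcal{T}\subset V^*/H^\perp\cong H^*$ measured with respect to the volume form $\xi_1\wedge\ldots\wedge\xi_k$.

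Next I would identify $\pi_H\mathcal{T}$ through its support function. Since projection of a convex body corresponds to restriction of the support function, the support function of $\pi_H\mathcal{T}$, viewed as a function on $H$, is $\sqrt{g}\vert_H=\sqrt{g\vert_H}$. Set $\gamma=g\vert_H$. Two cases arise. If $H\cap{\rm Ker}\,g\ne 0$, then $\sqrt{\gamma}$ vanishes on a nonzero vector of $H$, so $\pi_H\mathcal{T}$ lies in a proper linear subspace of $H^*$ and therefore ${\rm vol}_k(\pi_H\mathcal{T})=0$; on the other side the images of $\xi_1,\ldots,\xi_k$ in $V/{\rm Ker}\,g$ are linearly dependent, so ${\rm vol}_{k,g}(\xi_1\wedge\ldots\wedge\xi_k)=0$ as well, and the identity holds trivially.

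It remains to treat the case $H\cap{\rm Ker}\,g=0$, in which $\gamma$ is positive definite and $\pi_H\mathcal{T}$ is the ellipsoid in $H^*$ given by the unit ball of the dual inner product $\gamma^*$. Here the computation is the heart of the proof. I would choose a $\gamma$-orthonormal basis $e_1,\ldots,e_k$ of $H$ with dual basis $e^1,\ldots,e^k$ of $H^*$, so that $\pi_H\mathcal{T}$ is the standard Euclidean unit ball in the coordinates dual to the $e^i$, of volume ${\rm v}_k$. Writing $\xi_i=\sum_j a_{ij}e_j$ and $A=(a_{ij})$, the identity $\xi_1\wedge\ldots\wedge\xi_k=\det(A)\,e_1\wedge\ldots\wedge e_k$ shows that the volume form $\xi_1\wedge\ldots\wedge\xi_k$ normalizes the $k$-volume on $H^*$ by the factor $\vert\det A\vert$, whence ${\rm vol}_k(\pi_H\mathcal{T})={\rm v}_k\cdot\vert\det A\vert$. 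On the other hand ${\rm vol}_{k,g}(\xi_1\wedge\ldots\wedge\xi_k)$ is the $\gamma$-volume of the parallelotope spanned by the $\xi_i$, which equals $\vert\det A\vert$ in the $\gamma$-orthonormal basis. Comparing the two expressions gives the required equality.

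The main obstacle is precisely this last normalization bookkeeping: one must correctly match the volume element $\xi_1\wedge\ldots\wedge\xi_k$, regarded as a volume form on $V^*/H^\perp\cong H^*$ through the identification of Definition \ref{d_m}, with the $\gamma$-volume of the parallelotope spanned by the $\xi_i$ in $H$. The point is that passing to the dual inner product $\gamma^*$ on $H^*$ replaces the ellipsoid by the standard unit ball, while the factor $\vert\det A\vert$ enters exactly once on each side; hence the constant surviving the comparison is precisely ${\rm v}_k$, independent of $g$, of $H$, and of the choice of the $\xi_i$.
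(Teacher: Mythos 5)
Your proof is correct, and it shares the paper's skeleton --- reduce to the pointwise identity $d_k({\mathcal T}_g(x))(\xi_1,\ldots,\xi_k)={\rm v}_k\cdot{\rm vol}_{k,g_x}(\xi_1\wedge\ldots\wedge\xi_k)$ at a fixed $x$ --- but the way you finish is genuinely different. The paper's proof is essentially a citation: for non-degenerate $g$, identify $V$ with $V^*$ by means of $g$ itself, so that ${\mathcal T}_g(x)$ becomes the unit ball and the claim is exactly formula (\ref{d_m(A)}) of Sect.\,\ref{preliminaries}; degenerate $g$ is then handled by a continuity argument, approximating by non-degenerate forms. You avoid both steps: you re-derive the needed projection formula directly, identifying $\pi_H{\mathcal T}$ through its support function $\sqrt{g\vert_H}$ as the unit ball of the dual form $\gamma^*$ and computing both sides in a $\gamma$-orthonormal basis, where each side produces the same factor $\vert\det A\vert$; and you dispose of degeneracy by the algebraic dichotomy $H\cap{\rm Ker}\,g=0$ versus $H\cap{\rm Ker}\,g\ne 0$, verifying that in the second case both sides vanish. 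The paper's route is shorter because (\ref{d_m(A)}) is already on record; yours is self-contained, needs no limiting argument, and treats degenerate forms (the case actually used later, e.g.\ for the pulled-back forms $h_i=\theta_i^*g_i$ in Prop.\,\ref{average1}) exactly and uniformly rather than as a limit. Two points are left tacit but are harmless: the $\xi_i$ may be assumed linearly independent, since otherwise both sides are $0$ by homogeneity; and in your first case the vanishing of ${\rm vol}_k(\pi_H{\mathcal T})$ uses that the support function is even, so that $\sqrt{\gamma}(\pm v)=0$ forces $\pi_H{\mathcal T}$ into the hyperplane of $H^*$ annihilated by $v$.
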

\begin{proof} Since the value of $D_k({\mathcal T}_g)$ at $x$ is $d_k({\mathcal T}_g(x))$, this follows from Definitions \ref{d_m} and \ref{vol}. In particular, if $g$ is non-degenerate then the assertion of the lemma is precisely the
equality (\ref{d_m(A)}). 
The general case is obtained by the continuity argument.
\end{proof}

\begin{corollary} The densities ${\rm vol}_{k,g}$ are normal.
\end{corollary}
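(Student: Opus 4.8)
The plan is to deduce the corollary directly from Lemma \ref{formula} and the normality of the densities $D_m$ recorded in Section \ref{preliminaries}. By Lemma \ref{formula} we have ${\rm vol}_{k,g} = {\rm v}_k^{-1}\, D_k({\mathcal T}_g)$, so it suffices to show that the $k$-density $D_k({\mathcal T}_g) = D_k({\mathcal T}_g, \ldots, {\mathcal T}_g)$ is normal: multiplying a normal density by the positive constant ${\rm v}_k^{-1}$ again yields a normal density, since if $\delta = \chi_k(\mu)$ then ${\rm v}_k^{-1}\delta = \chi_k({\rm v}_k^{-1}\mu)$ and ${\rm v}_k^{-1}\mu$ is still translation invariant and finite on compact sets, hence a normal measure.

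First I would observe that for each $x$ the body ${\mathcal T}_g(x)$ is an ellipsoid in $T_x^*(X)$, in particular a centrally symmetric smooth convex body. When $g$ is non-degenerate, each ${\mathcal T}_g(x)$ is full-dimensional, so the hypothesis of (\ref{*}) holds at every point and item (3) of Section \ref{preliminaries}, namely the relation (\ref{**}), applies verbatim: taking all ${\mathcal E}_i = {\mathcal T}_g$ gives $D_1({\mathcal T}_g)\cdot \ldots \cdot D_1({\mathcal T}_g) = k!\, D_k({\mathcal T}_g)$, exhibiting $D_k({\mathcal T}_g)$ as a scalar multiple of a product of the normal $1$-densities $D_1({\mathcal T}_g)$. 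Since ${\mathfrak n}(X)$ is a ring, the product is normal, and therefore so is $D_k({\mathcal T}_g)$.

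The only point requiring care is the degenerate locus, where ${\rm Ker}\, g_x \ne 0$ and the ellipsoid ${\mathcal T}_g(x)$ lives in the orthogonal complement of ${\rm Ker}\, g_x$, hence is a lower-dimensional smooth convex body. There the full-dimensional form of (\ref{*}) cannot be quoted as stated, but Section \ref{preliminaries} already records that $d_1(A)$, and thus the $1$-density $D_1$, is normal for a smooth convex body of \emph{any} dimension, this being obtained through the pull-back operation for normal measures. Consequently the factorization (\ref{**}) again presents $D_k({\mathcal T}_g)$ as a product of normal $1$-densities, so $D_k({\mathcal T}_g)$ is normal in every case and ${\rm vol}_{k,g} = {\rm v}_k^{-1}\, D_k({\mathcal T}_g)$ is normal as claimed. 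I expect this degenerate case, where the rank of $g_x$ drops, to be the main obstacle: one must verify that the pointwise ellipsoids still lie within the scope of the normality results of Section \ref{preliminaries}, which is precisely what the any-dimension version of the normality of $d_1$ provides. Everything else is a direct citation of Lemma \ref{formula} and (\ref{**}).
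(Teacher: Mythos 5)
Your proof is correct and takes essentially the same route as the paper: the paper's own argument is exactly the combination of Lemma \ref{formula} with the factorization (\ref{**}), which exhibits $D_k({\mathcal T}_g)$ as a constant multiple of a product of normal $1$-densities. Your extra discussion of the degenerate locus, handled via the normality of $d_1(A)$ for smooth convex bodies of any dimension, only fills in a detail the paper leaves implicit.
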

\begin{proof}
This follows from the normality of  $D_k({\mathcal T}_g)$, see (\ref{**}).
\end{proof}

\begin{corollary} ${\rm vol}_{k,g} = {2^k\over k!{\rm v}_k}{\rm vol}^k_{1,g}$.\label{cor2}
\end{corollary}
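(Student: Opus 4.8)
The plan is to derive this corollary purely from two facts already in place, namely the product formula (\ref{**}) and the identification of the Finsler densities $D_k(\mathcal{T}_g)$ provided by Lemma \ref{formula}. The underlying idea is that both ${\rm vol}_{k,g}$ and ${\rm vol}_{1,g}$ are, up to explicit constants, the Finsler densities attached to one and the same Finsler ellipsoid $\mathcal{T}_g$, and (\ref{**}) expresses the $k$-fold product of the degree-one density in terms of the degree-$k$ density. So the whole statement should reduce to a short substitution.

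Concretely, first I would record the two instances of Lemma \ref{formula} that are needed. For $k=1$ it gives $D_1(\mathcal{T}_g) = {\rm v}_1\cdot{\rm vol}_{1,g} = 2\,{\rm vol}_{1,g}$, using ${\rm v}_1 = 2$ for the length of the unit interval; for general $k$ it gives $D_k(\mathcal{T}_g) = {\rm v}_k\cdot{\rm vol}_{k,g}$. Next I would apply the product formula (\ref{**}) with all the Finsler convex sets equal, $\mathcal{E}_1 = \ldots = \mathcal{E}_k = \mathcal{T}_g$, which yields $D_1(\mathcal{T}_g)^k = k!\,D_k(\mathcal{T}_g)$ in ${\mathfrak n}_k(X)$. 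Substituting the two identifications turns this into $2^k\,{\rm vol}^k_{1,g} = k!\,{\rm v}_k\,{\rm vol}_{k,g}$, and solving for ${\rm vol}_{k,g}$ gives exactly the claimed relation ${\rm vol}_{k,g} = \tfrac{2^k}{k!{\rm v}_k}{\rm vol}^k_{1,g}$.

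The only point that requires care is the applicability of (\ref{**}), whose hypotheses (inherited from (\ref{*})) demand that the fiberwise bodies $\mathcal{T}_g(x)$ be centrally symmetric smooth convex bodies. Central symmetry is immediate, since each $\mathcal{T}_g(x)$ is an ellipsoid. Smoothness holds when $g_x$ is non-degenerate, in which case $\mathcal{T}_g(x)$ is a full-dimensional ellipsoid; this is the step I expect to be the main, albeit minor, obstacle, because for degenerate $g_x$ the ellipsoid collapses into a proper subspace and the product formula is not directly applicable. I would dispose of this exactly as in the proof of Lemma \ref{formula}: both sides of the asserted identity depend continuously on $g$, the identity holds on the dense set of non-degenerate forms, and hence it extends to all non-negative $g$ by continuity.
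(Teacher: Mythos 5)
Your proposal is correct and follows essentially the same route as the paper: combine Lemma \ref{formula} for $k=1$ and for general $k$ with the product identity (\ref{**}) applied to $\mathcal{E}_1=\ldots=\mathcal{E}_k=\mathcal{T}_g$, then solve for ${\rm vol}_{k,g}$. Your additional continuity argument for degenerate $g_x$ (where $\mathcal{T}_g(x)$ fails to be a smooth full-dimensional body) is a sensible extra precaution, in the same spirit as the continuity step the paper itself invokes in the proof of Lemma \ref{formula}.
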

\begin{proof}
We have
$${2^k\over k!{\rm v}_k}{\rm vol}^k_{1,g} = {1\over k!{\rm v}_k}D_1^k({\mathcal T}_g) = {1\over {\rm v}_k}D_k({\mathcal T}_g) = {\rm vol}_{k,g},$$
where the first equality follows from Lemma \ref {formula} (for $k=1$), the second from (\ref{**}), and the third again from Lemma \ref{formula}.
\end{proof}

\begin{definition}\label{mixedR} Let $g_1,\ldots,g_n$ be $n$ non-negative quadratic forms on an $n$-dimensional manifold $X$. 
The integral over $X$ of the $n$-density 
$${\rm vol}_{g_1, \ldots, g_n} = {2^n\over n!{\rm v}_n}{\rm vol}_{1,g_1}\cdot \ldots \cdot {\rm vol}_{1,g_n}$$
is called the mixed Riemannian volume of $X$ with respect to the mixing $n$-tuple $g_1,\ldots,g_n$.
\end{definition}
\noindent
The mixed Riemannian volume is denoted by ${\rm vol}_{g_1,\ldots,g_n}(X)$.
It follows from Corollary \ref{cor2} that for a Riemannian metric $g$ the mixed volume ${\rm vol}_{g,\ldots,g}(X)$
coincides with the Riemannian volume of $X$.
\begin{corollary} \label{finsler ellipsoids} ${\rm vol}_{g_1, \ldots, g_n}(X)$ is equal to the mixed volume of Finsler ellipsoids
${\mathcal T}_{g_1}, \ldots, {\mathcal T}_{g_n}$ divided by ${\rm v}_n$.
\end{corollary}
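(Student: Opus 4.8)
The plan is to unwind all the definitions and reduce the statement to the already-established identification of the integral of $D_n$ with the mixed volume of Finsler convex sets recorded in item (2) of Sect.\,\ref{preliminaries}. First I would start from Definition \ref{mixedR}, which exhibits the integrand as
$${\rm vol}_{g_1,\ldots,g_n} = {2^n\over n!\,{\rm v}_n}\,{\rm vol}_{1,g_1}\cdot\ldots\cdot{\rm vol}_{1,g_n},$$
and rewrite each factor ${\rm vol}_{1,g_i}$ in terms of the Finsler ellipsoid ${\mathcal T}_{g_i}$. Applying Lemma \ref{formula} with $k=1$, and using that the unit ball in ${\mathbb R}^1$ is the segment $[-1,1]$ so ${\rm v}_1 = 2$, I get $D_1({\mathcal T}_{g_i}) = 2\,{\rm vol}_{1,g_i}$, that is, ${\rm vol}_{1,g_i} = \tfrac12 D_1({\mathcal T}_{g_i})$.

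Next I would multiply these $n$ identities and invoke the product formula (\ref{**}) for the densities $D_1$ with $m=n$ and ${\mathcal E}_i = {\mathcal T}_{g_i}$. This yields
$${\rm vol}_{1,g_1}\cdot\ldots\cdot{\rm vol}_{1,g_n} = {1\over 2^n}\,D_1({\mathcal T}_{g_1})\cdot\ldots\cdot D_1({\mathcal T}_{g_n}) = {n!\over 2^n}\,D_n({\mathcal T}_{g_1},\ldots,{\mathcal T}_{g_n}).$$
Substituting back into Definition \ref{mixedR}, the numerical constants cancel, $\tfrac{2^n}{n!\,{\rm v}_n}\cdot\tfrac{n!}{2^n} = \tfrac1{{\rm v}_n}$, and I obtain the pointwise identity of $n$-densities ${\rm vol}_{g_1,\ldots,g_n} = \tfrac1{{\rm v}_n}\,D_n({\mathcal T}_{g_1},\ldots,{\mathcal T}_{g_n})$ on $X$. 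Integrating over $X$ and recalling from item (2) of Sect.\,\ref{preliminaries} that $\int_X D_n({\mathcal T}_{g_1},\ldots,{\mathcal T}_{g_n})$ is by definition the mixed volume of the Finsler ellipsoids ${\mathcal T}_{g_1},\ldots,{\mathcal T}_{g_n}$, the corollary follows.

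The computation is essentially bookkeeping of constants, so the single point that requires care — the main obstacle, such as it is — is the legitimacy of applying the product formula (\ref{**}), which rests on the hypotheses of (\ref{*}) that the bodies ${\mathcal T}_{g_i}(x)$ be centrally symmetric and smooth. Ellipsoids are centrally symmetric, and for non-degenerate $g_i$ the body ${\mathcal T}_{g_i}(x)$ is a smooth convex body of full dimension, so (\ref{**}) applies verbatim. For degenerate $g_i$ the ellipsoid ${\mathcal T}_{g_i}(x)$ lives in the orthogonal complement of ${\rm Ker}\,(g_i)_x$ and is lower-dimensional; I would dispose of this case exactly as in the proofs of Lemma \ref{formula} and Corollary \ref{cor2}, by approximation and a continuity argument, since $D_1$, $D_n$, and the mixed-volume functional all depend continuously on the data. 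Everything else is a direct substitution.
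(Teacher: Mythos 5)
Your proposal is correct and follows essentially the same route as the paper's own proof: rewrite ${\rm vol}_{1,g_i} = \tfrac12 D_1({\mathcal T}_{g_i})$ via Lemma \ref{formula}, collapse the product of $D_1$'s to $n!\,D_n({\mathcal T}_{g_1},\ldots,{\mathcal T}_{g_n})$ by (\ref{**}), cancel constants, and identify $\int_X D_n$ with the mixed volume via item (2) of Sect.\,\ref{preliminaries}. Your extra remark on handling degenerate $g_i$ by the same continuity argument used for Lemma \ref{formula} is a sensible precaution that the paper leaves implicit.
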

\begin{proof} Since $2\,{\rm vol}_{1,g_i} = D_1({\mathcal T}_{g_i})$, 
we have
$${\rm vol}_{g_1, \ldots, g_n} = 
{1\over n!{\rm v}_n}D_1({\mathcal T}_{g_1})\cdot \ldots \cdot D_1({\mathcal T}_{g_n})=
{1\over {\rm v}_n}D_n({\mathcal T}_{g_1}, \ldots, {\mathcal T}_{g_n}),$$
where the last equality follows from (\ref{**}).
\end{proof}
\medskip
\noindent
{\it Proof of Theorem\,\ref{crofton mixed}.} The Crofton densities on $Y_i$ are equal to
$$p_{i *}q_i^*\Phi _i = C_i\cdot{\rm vol}_{n_i, g_i} = C_i\cdot {2^{n_i}\over n_i!{\rm v}_{n_i}}\cdot {\rm vol}_{1,g_i}^{n_i}, $$ where $ i=1,\ldots, m$.
By Theorem\,\ref{crofton product} the Crofton density on $Y$ is their product, hence 

$${\mathfrak M}_{Y,\Gamma,\mu}(M)=
\int _M(p_{1*}q_1^*\Phi_1)\cdot \ldots \cdot (p_{m*}q_m^*\Phi_m)=$$
$$= 2^n\cdot \prod_{i=1}^m{C_i \over  n_i!{\rm v}_{n_i}}\cdot \int_M{\rm vol}_{1,g_1}^{n_1}\cdot \ldots \cdot {\rm vol}_{1,g_m}^{n_m} =$$
$$= n!{\rm v}_n\cdot \prod_{i=1}^m{C_i \over  n_i!{\rm v}_{n_i}}\cdot {\rm vol}_{g^*}(M)$$
by the definition of the mixed volume, where the mixing $n$-tuple $g^*$ is the one from the statement of Theorem\,\ref{crofton mixed}.
\hfill $\square $

\section{Examples}\label{examples}
Using our previous results,  
we obtain here Crofton formulae in two cases:
(1) for the product of Euclidean spaces;
(2) for the product of spheres.
For an arbitrary set of $n$ finite-dimensional Euclidean spaces
$V_1, \ldots, V_n \subset C^\infty (X)$ consider  
the system of equations
\begin{equation}f_1 - c_1 = \ldots = f_n - c_n = 0;\  0\ne f_i \in V_i, c_i \in{\mathbb R}. \label{1}
\end{equation}
Using the Crofton formula for the product of Euclidean spaces, we compute the average number of solutions over all such systems.
This number turns out to be equal to the mixed volume of certain Finsler ellipsoids or,
in other words, to the corresponding mixed Riemannian volume of $X$. For the system  
\begin{equation}
f_1=\ldots=f_n = 0 ;\  f_i \in V_i  \ , \label{2}
\end{equation}
a
similar result is contained in \cite{AK} and \cite{ZK}.
The computation for system (\ref{2}) in \cite{AK} uses the Crofton formula for the product of spheres.
The both computations can be regarded 
as smooth versions of the BKK theorem \cite{Be}.  

\medskip
\noindent{\it The product of Euclidean spaces.} Let $V$  be a Euclidean space, $g$ the quadratic form on $V$
defining the metric, $Y = V$,
and $\Gamma = {\rm AGr}^1(V)$. Take the volume density $\Phi $ on ${\rm AGr}^1(V)$
invariant
under rotations and translations. Normalize $\Phi $ by the condition that the corresponding measure $\mu $
equals 1 on the set of all hyperplanes intersecting a segment of length 1.
The classical Crofton formula for curves in a Euclidean space asserts that
$\Omega _{Y,\gamma,\mu} = {\rm vol}_{1,g}$.

\begin{corollary} \label{metric prod} Let $V_1, \ldots, V_n$ be Euclidean spaces, $g_1, \ldots, g_n$ their metric forms, $Y = V_1\times \ldots \times V_n$, $\Gamma = {\rm AGr}^1(V_1)\times \ldots \times {\rm AGr}^1(V_n)$,
and
$\mu = \mu _1 \times \ldots \times \mu_n$. Assume that all measures $\mu_i$ satisfy the above conditions.
Then
$$\Omega _{Y, \Gamma, \mu} = {\rm vol}_{1,h_1}\cdot \ldots \cdot {\rm vol}_{1,h_n},$$
where $h_1, \ldots, h_n$ are the quadratic forms on $Y = V_1\times \ldots \times V_n$ given by $h_i(v_1, \ldots, v_n) = g_i(v_i), i=1,\ldots,n$.
\end{corollary}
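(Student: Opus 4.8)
The plan is to deduce this from the product theorem (Theorem \ref{crofton product}) applied $n-1$ times, using the single-factor case as input and then rewriting each pulled-back factor in terms of the forms $h_i$. I regard $\{V_1 \times \ldots \times V_n, \Gamma, \mu\}$ as the product of the $n$ Crofton data $\{V_i, {\rm AGr}^1(V_i), \mu_i\}$, each of which is of the type covered by Theorem \ref{crofton density}: here $\Phi_i$ is the rotation- and translation-invariant top-order density on ${\rm AGr}^1(V_i)$, normalized as stated, and the classical Crofton formula for curves gives $\Omega_{V_i, \Gamma_i, \mu_i} = {\rm vol}_{1, g_i}$.

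First I would iterate Theorem \ref{crofton product}. Since the product in ${\mathfrak n}(Y)$ is commutative and associative and the functor ${\mathfrak n}$ is contravariant (Sect.\,\ref{preliminaries}), grouping the factors as $(V_1 \times \ldots \times V_{n-1}) \times V_n$ and inducting on the number of factors yields
$$\Omega_{Y, \Gamma, \mu} = \pi_1^* {\rm vol}_{1, g_1} \cdot \ldots \cdot \pi_n^* {\rm vol}_{1, g_n},$$
the product being taken in ${\mathfrak n}(Y)$, where $\pi_i : Y \to V_i$ is the $i$-th projection. At each stage the product measure $\mu_1 \times \ldots \times \mu_n$ is exactly the measure to which the product theorem applies, so the induction goes through with no change in the hypotheses.

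It then remains to identify $\pi_i^* {\rm vol}_{1, g_i}$ with ${\rm vol}_{1, h_i}$. The cleanest route is through the Finsler ellipsoid ${\mathcal T}_{g_i}$, whose support function is $\sqrt{g_i}$: a computation of support functions shows $\pi_i^* {\mathcal T}_{g_i} = {\mathcal T}_{h_i}$, because the inverse image $d^*\pi_i({\mathcal T}_{g_i})$ has support function $\xi \mapsto \sqrt{g_i(d\pi_i \xi)}$, which for $\xi = (\xi_1, \ldots, \xi_n)$ equals $\sqrt{g_i(\xi_i)} = \sqrt{h_i(\xi)}$. Since $2\,{\rm vol}_{1, g} = D_1({\mathcal T}_g)$ (Lemma \ref{formula} for $k=1$) and the densities $D_1$ commute with inverse image of Finsler convex sets (item (4) of Sect.\,\ref{preliminaries}), this gives $\pi_i^* {\rm vol}_{1, g_i} = {\rm vol}_{1, h_i}$. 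Substituting into the displayed product proves the corollary.

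I do not expect a genuine obstacle here, as the analytic content is entirely carried by Theorem \ref{crofton product}. The only points needing care are the bookkeeping of the iteration (associativity and commutativity of the ring product, and compatibility of the projections under composition) and the verification that forming ${\rm vol}_{1, g}$ commutes with pullback; the latter is the $m=1$ special case of the naturality already recorded for the densities $D_m$.
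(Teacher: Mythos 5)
Your proposal is correct and follows the paper's own route: the paper's entire proof is ``This follows from Theorem \ref{crofton product}'', and you have simply filled in the routine details (iterating the two-factor theorem and identifying $\pi_i^*{\rm vol}_{1,g_i}$ with ${\rm vol}_{1,h_i}$). The only cosmetic remark is that the last identification needs no detour through Finsler ellipsoids, since $\pi_i^*{\rm vol}_{1,g_i}(\xi) = \sqrt{g_i(d\pi_i\xi)} = \sqrt{h_i(\xi)} = {\rm vol}_{1,h_i}(\xi)$ directly from Definition \ref{vol}, the pullback of a normal density being normal by the contravariance recorded in Sect.\,\ref{preliminaries}.
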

\begin{proof} This follows from Theorem \ref{crofton product}.
\end{proof}
\noindent
Let $X$ be a manifold of dimension $n$ and let
$V_1, \ldots, V_n$ be finite dimensional vector spaces of real $C^\infty $ functions
on $X$. 
Assume that each $V_i$ is equipped with a scalar product. 
To define the average number of solutions over all systems (\ref{1}), denote
by $V_i^*$ the dual Euclidean space and regard $f_i$ as a linear
functional $f_i:V_i^* \to {\mathbb R}$. System (\ref{1}) can be viewed as a set of hyperplanes $H_i \subset V_i^*$
having equations $f_i = a_i$. The number of solutions is denoted by $N(H_1, \ldots, H_n)$.
The average number of solutions is, by definition, the integral
$$\int _{{\rm AGr}^1(V_1^*)\times \ldots \times{\rm AGr}^1(V_n^*)}\ N(H_1, \ldots ,H_n)\cdot d\mu.$$
Let $\theta _i : X \to V_i^*$ be the map assigning to any $x \in X$
the functional
$$\theta _i(x)(f) = f(x),\ \ f\in V_i.$$

\begin{proposition}\label{average1}  Define the non-negative quadratic forms $h_i$ on $X$ by $h_i
=\theta _i^*g_i$, where $g_i$ are the metric quadratic forms on $V_i^*$.
Then the average number of solutions of systems (\ref {1})
is equal to
$$\int_X{\rm vol}_{1,h_1}\cdot \ldots \cdot {\rm vol}_{1,h_n}.$$
\end{proposition}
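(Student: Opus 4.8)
The plan is to realize the systems (\ref{1}) as intersections with product hyperplanes in the product of the dual Euclidean spaces, and then to transport the product Crofton formula back to $X$ along the evaluation map. Set $Y = V_1^* \times \ldots \times V_n^*$, $\Gamma = {\rm AGr}^1(V_1^*) \times \ldots \times {\rm AGr}^1(V_n^*)$ and $\mu = \mu_1 \times \ldots \times \mu_n$, each $\mu_i$ being normalized as above so that $\Omega_{V_i^*, {\rm AGr}^1(V_i^*), \mu_i} = {\rm vol}_{1, g_i}$. Writing $\Theta = (\theta_1, \ldots, \theta_n): X \to Y$ and $\pi_i : Y \to V_i^*$ for the projections, a point $x$ solves the system attached to $(H_1, \ldots, H_n)$ exactly when $\theta_i(x) \in H_i$ for all $i$, i.e. when $\Theta(x)$ lies in the codimension-$n$ affine subspace $H_1 \times \ldots \times H_n$. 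Thus $N(H_1, \ldots, H_n) = \#\,\Theta^{-1}(H_1 \times \ldots \times H_n)$, and by Corollary \ref{metric prod} the Crofton density on $Y$ is $\Omega_{Y, \Gamma, \mu} = {\rm vol}_{1, \tilde h_1} \cdots {\rm vol}_{1, \tilde h_n}$ with $\tilde h_i = \pi_i^* g_i$.

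The first real step is to apply the Crofton formula along the map $\Theta$, rather than along an embedded submanifold, in the form
$$\int_\Gamma N(H_1, \ldots, H_n)\, d\mu = \int_X \Theta^*\Omega_{Y, \Gamma, \mu}.$$
When $\Theta$ is an embedding this is immediate: $M = \Theta(X)$ is an $n$-dimensional submanifold of $Y$, $N$ equals $\#(M \cap (H_1 \times \ldots \times H_n))$, and pulling the restriction $\Omega_{Y,\Gamma,\mu}\vert_M$ back along the diffeomorphism $\Theta : X \to M$ rewrites $\int_M \Omega_{Y,\Gamma,\mu}$ as $\int_X \Theta^*\Omega_{Y,\Gamma,\mu}$. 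In general one gets the same identity by replacing the incidence variety of the introduction with its fibre product over $Y$ with $\Theta$, namely $\{(x, \gamma) : \Theta(x) \in Y_\gamma\}$, and repeating the push-forward and pull-back argument: the projection to $\Gamma$ now counts preimages under $\Theta$, while push-forward to $X$ yields $\Theta^*\Omega_{Y,\Gamma,\mu}$ by base change.

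It then remains to identify the pulled-back density. Since $\theta_i = \pi_i \circ \Theta$, we have $\Theta^*\tilde h_i = \theta_i^* g_i = h_i$, and for $1$-densities the pull-back of ${\rm vol}_{1, \tilde h_i}$ is ${\rm vol}_{1, h_i}$ (equivalently, $2\,{\rm vol}_{1, g} = D_1({\mathcal T}_g)$ and, by item (4) of Section \ref{preliminaries}, $D_1$ commutes with inverse images of Finsler convex sets). Because the pull-back on normal densities is a ring homomorphism by the contravariance of ${\mathfrak n}$ recalled in Section \ref{preliminaries}, this gives
$$\Theta^*\Omega_{Y, \Gamma, \mu} = \Theta^*\bigl({\rm vol}_{1, \tilde h_1} \cdots {\rm vol}_{1, \tilde h_n}\bigr) = {\rm vol}_{1, h_1} \cdots {\rm vol}_{1, h_n},$$
and combining this with the previous display produces the asserted value $\int_X {\rm vol}_{1,h_1}\cdots {\rm vol}_{1,h_n}$.

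The main obstacle is the second step, since the Crofton formula of the introduction is stated for an embedded $M \subset Y$ while $\Theta$ need be neither injective nor an immersion. I expect the fibre-product construction above to be the cleanest remedy, and it also accounts correctly for multiplicities: note that $\Theta^*\Omega_{Y,\Gamma,\mu} = {\rm vol}_{1, h_1} \cdots {\rm vol}_{1, h_n}$ vanishes on the locus where some $d\theta_i$ is degenerate, which is consistent with the absence of transverse intersections there. One should also invoke Sard's theorem to guarantee that $N(H_1, \ldots, H_n)$ is finite for $\mu$-almost every $(H_1, \ldots, H_n)$, so that both sides of the identity are well defined.
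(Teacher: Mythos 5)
Your proposal is correct and follows essentially the same route as the paper: the embedding case is handled by Corollary~\ref{metric prod} together with the contravariance of normal densities (pull-back along $\Theta$, using $\Theta^*\tilde h_i = \theta_i^* g_i = h_i$), and the non-embedding case by Sard's lemma. Your fibre-product/base-change sketch for the non-embedding case is just a spelled-out version of the argument the paper delegates to \cite{AK}, Sect.~4.4, so there is nothing substantively different to flag.
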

\begin{proof}
If $\Theta = \theta _1 \times \ldots \times \theta _n$ is an ebedding then the asserion follows from Cor.\,\ref{metric prod}
by the contravariance argument. If $\Theta $ is not an embeding then the proof
is obtined by application of Sard's lemma as it is done in \cite{AK}, Sect.\,4.4. 
\end{proof}

\begin{proposition}\label{average2}
Consider the Finsler convex bodies ${\mathcal T}_i$ in $X$, such that each ${\mathcal T}_i(x)$ is a convex body
in $T_x^*(X)$ with support function $\sqrt {(h_i)_x}$,
where $h_i$ are defined in Prop.\,\ref{average1}. Then the average number of solutions of systems (\ref {1})
is equal to

{\rm (i)} the mixed Riemannian volume of $X$ multiplied by ${n!{\rm v}_n\over 2^n}$ with respect to the mixing
$n$-tuple $h_1, \ldots, h_n$;

{\rm (ii)} the mixed volume of ${\mathcal T}_1, \ldots, {\mathcal T}_n$ 
multiplied by $n!\over 2^n$.
\end{proposition}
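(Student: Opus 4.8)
The plan is to derive both statements directly from Proposition \ref{average1}, which already identifies the average number of solutions with the integral $\int_X {\rm vol}_{1,h_1}\cdot\ldots\cdot{\rm vol}_{1,h_n}$. Everything that remains is to rewrite this single integral in the two required forms, so the argument is essentially a matter of matching it against the definitions introduced earlier in Section \ref{mixedriemann}.

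First I would establish (i). By Definition \ref{mixedR}, the mixed Riemannian volume with respect to the mixing $n$-tuple $h_1,\ldots,h_n$ is
$$
{\rm vol}_{h_1,\ldots,h_n}(X)=\frac{2^n}{n!\,{\rm v}_n}\int_X{\rm vol}_{1,h_1}\cdot\ldots\cdot{\rm vol}_{1,h_n}.
$$
Inverting this relation, the integral furnished by Proposition \ref{average1} equals $\frac{n!\,{\rm v}_n}{2^n}$ times the mixed Riemannian volume, which is precisely the assertion of (i).

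Next I would prove (ii). The first point is to observe that the Finsler convex body ${\mathcal T}_i$ appearing in the statement coincides with the Finsler ellipsoid ${\mathcal T}_{h_i}$ from the discussion preceding Lemma \ref{formula}: both are defined fiberwise by the convex body in $T_x^*(X)$ whose support function is $\sqrt{(h_i)_x}$. With this identification in hand, Corollary \ref{finsler ellipsoids} gives that ${\rm vol}_{h_1,\ldots,h_n}(X)$ equals $\frac{1}{{\rm v}_n}$ times the mixed volume of ${\mathcal T}_1,\ldots,{\mathcal T}_n$. Substituting this into the expression from (i), the factors of ${\rm v}_n$ cancel and the average number of solutions becomes $\frac{n!}{2^n}$ times the mixed volume of ${\mathcal T}_1,\ldots,{\mathcal T}_n$, as required.

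There is no serious obstacle here: once Proposition \ref{average1} is in place, both assertions reduce to bookkeeping identities assembled from Definition \ref{mixedR} and Corollary \ref{finsler ellipsoids}. The only point demanding care is the verification that the ${\mathcal T}_i$ of the present statement are literally the ${\mathcal T}_{h_i}$ of the earlier results, so that Corollary \ref{finsler ellipsoids} applies verbatim; this is immediate from comparing support functions, but it is the hinge that ties the two halves of the argument together.
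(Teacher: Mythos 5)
Your proposal is correct and follows essentially the same route as the paper: part (i) is obtained exactly as in the paper from Proposition \ref{average1} together with Definition \ref{mixedR}, and part (ii) rests on the same identities $2\,{\rm vol}_{1,h_i}=D_1({\mathcal T}_i)$ and (\ref{**}), which you invoke packaged as Corollary \ref{finsler ellipsoids} while the paper simply inlines that corollary's proof. Your explicit remark that the ${\mathcal T}_i$ of the statement coincide with the Finsler ellipsoids ${\mathcal T}_{h_i}$ is the right hinge and is indeed immediate from comparing support functions.
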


\begin{proof} Assertion (i) follows from Prop.\,\ref{average1} and  from Def.\,\ref{mixedR}.
Assertion (ii) follows from the equality $2{\rm vol}_{1,h_i} = D_1({\mathcal T}_i) $ (see Lemma \ref{formula}) and from
(\ref{**}). \end{proof}

\medskip
\noindent
{\it The product of spheres.} The Crofton formula for one sphere is classical, see \cite{Sa}.
The case of several spheres is considered in \cite{AK}.
We want to show here that the Crofton formula for the product of spheres is a consequence of
our previous results in the present paper. Let $V_1, \ldots, V_n$ be Euclidean vector spaces with metric forms $g_i$,
${\rm dim}\, V_i = n_i + 1$. Denote by $S_i$ the unit sphere in $V_i$, $Y_i = S_i, \Gamma _i
={\rm Gr}_{n_i}(V_i)$, $\mu _i$ the normalized orthogonally invariant measure on $\Gamma _i$, and $Y_\gamma = S_i\cap\gamma$ for $\gamma \in \Gamma _i$. 

\begin{corollary} Let $Y = S_1\times \ldots \times S_n, \Gamma = \Gamma _1 \times \ldots \times \Gamma _n$,
and $\mu = \mu_1 \times \ldots \times \mu_n$. Then:

{\rm (i)} $\Omega _{Y,\Gamma,\mu} = {1\over \pi ^n}{\rm vol }_{1,h_1}\cdot \ldots \cdot {\rm vol}_{1,h_n}$,
where the quadratic form $h_i$ on $Y =V_1 \times \ldots \times V_n$
is given by $h_i(v_1, \ldots,v_n) = g_i(v_i)$;

{\rm (ii)} for any submanifold $M \subset Y $ of dimension $n$ 
we have
$${\mathfrak M}_{Y,\Gamma,\mu}(M) = {n!v_n\over(2\pi)^n}\cdot {\rm vol}_{h_1, \ldots, h_n}(M).$$
\end{corollary}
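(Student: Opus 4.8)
The plan is to reduce the product formula to the classical Crofton formula for a single sphere and then invoke the product and mixed-volume theorems already proved. First I would recall, for each factor $\{S_i,\Gamma_i,\mu_i\}$, the classical spherical Crofton formula. Here $\Gamma_i={\rm Gr}_{n_i}(V_i)$ parametrizes the great subspheres $S_i\cap\gamma$, which have codimension $1$ in $S_i$, and $\mu_i$ is the orthogonally invariant \emph{probability} measure. The classical result (see \cite{Sa}) gives a Crofton density proportional to the length density of the round metric, so that $\Omega_{S_i,\Gamma_i,\mu_i}=\frac{1}{\pi}\,{\rm vol}_{1,g_i}$, where ${\rm vol}_{1,g_i}$ is the $1$-density on $S_i$ induced by the restriction of $g_i$. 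The constant $\frac{1}{\pi}$ is pinned down by the normalization of $\mu_i$: on a full geodesic great circle $M=S_i\cap W$ of length $2\pi$, almost every great subsphere $S_i\cap\gamma$ meets $M$ in exactly $2$ antipodal points, so the average number of intersection points is $2=\frac{1}{\pi}\cdot 2\pi$, forcing the constant to be $\frac{1}{\pi}$. In the notation of Theorem \ref{crofton mixed} this is the case $C_i=\frac{1}{\pi}$ with codimension $1$ in each factor.

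For (i), I would obtain the $n$-fold product density by iterating Theorem \ref{crofton product}. That theorem gives the Crofton density on $Y=S_1\times\ldots\times S_n$ as the product of the pulled-back Crofton densities of the factors,
$$\Omega_{Y,\Gamma,\mu}=\pi_1^*\Omega_{S_1,\Gamma_1,\mu_1}\cdot\ldots\cdot\pi_n^*\Omega_{S_n,\Gamma_n,\mu_n}=\frac{1}{\pi^n}\,\pi_1^*{\rm vol}_{1,g_1}\cdot\ldots\cdot\pi_n^*{\rm vol}_{1,g_n}.$$
It then remains to identify each pull-back $\pi_i^*{\rm vol}_{1,g_i}$ with the $1$-density ${\rm vol}_{1,h_i}$ on $Y$. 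This is immediate from the definition of $h_i$: since $h_i(v_1,\ldots,v_n)=g_i(v_i)$ is exactly the pull-back $\pi_i^*g_i$ of the metric form, one checks that $\pi_i^*{\mathcal T}_{g_i}={\mathcal T}_{h_i}$ (the support function pulls back to $\sqrt{(h_i)}$), and the relation $2\,{\rm vol}_{1,g}=D_1({\mathcal T}_g)$ of Lemma \ref{formula} together with item (4) of Sect.\ref{preliminaries} gives $\pi_i^*{\rm vol}_{1,g_i}={\rm vol}_{1,h_i}$. This yields (i).

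For (ii), the quickest route is to feed (i) into the Crofton formula (\ref{croftonformula}) and convert the product of $1$-densities into the mixed Riemannian volume density by Definition \ref{mixedR}. Restricting $\Omega_{Y,\Gamma,\mu}$ to $M$ and integrating,
$${\mathfrak M}_{Y,\Gamma,\mu}(M)=\frac{1}{\pi^n}\int_M{\rm vol}_{1,h_1}\cdot\ldots\cdot{\rm vol}_{1,h_n}=\frac{1}{\pi^n}\cdot\frac{n!\,{\rm v}_n}{2^n}\int_M{\rm vol}_{h_1,\ldots,h_n}=\frac{n!\,{\rm v}_n}{(2\pi)^n}\,{\rm vol}_{h_1,\ldots,h_n}(M),$$
where the middle equality is the defining relation ${\rm vol}_{1,h_1}\cdots{\rm vol}_{1,h_n}=\frac{n!\,{\rm v}_n}{2^n}{\rm vol}_{h_1,\ldots,h_n}$ of Definition \ref{mixedR}. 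Equivalently, (ii) drops out at one stroke from Theorem \ref{crofton mixed} with $m=n$, each factor of codimension $1$, $C_i=\frac{1}{\pi}$, and $n_i!\,{\rm v}_{n_i}=1!\,{\rm v}_1=2$, so that $\prod_i n_i!\,{\rm v}_{n_i}=2^n$ and the prefactor collapses to $\frac{n!\,{\rm v}_n}{(2\pi)^n}$. The one genuinely non-formal step is the first paragraph: correctly normalizing the classical single-sphere Crofton formula and extracting the constant $\frac{1}{\pi}$ for the chosen probability measure $\mu_i$. Once that is in hand, both parts are direct consequences of the product and mixed-volume theorems, so I expect no further obstacle.
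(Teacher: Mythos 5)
Your proposal is correct and follows essentially the same route as the paper: the classical single-sphere Crofton formula $\Omega_{S_i,\Gamma_i,\mu_i}=\frac{1}{\pi}{\rm vol}_{1,g_i}$, then Theorem \ref{crofton product} for the product density and Definition \ref{mixedR} (equivalently Theorem \ref{crofton mixed} with $n_i=1$, ${\rm v}_1=2$) for the mixed-volume form of the answer. The extra details you supply --- pinning down the constant $\frac{1}{\pi}$ via a great circle and checking $\pi_i^*{\rm vol}_{1,g_i}={\rm vol}_{1,h_i}$ --- are steps the paper leaves implicit, not a different argument.
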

\begin{proof} The Crofton formula for curves in a sphere tells us that $\Omega _{Y_i, \Gamma _i, \mu_i} = {1\over \pi}{\rm vol}_{1,g_i}$. Therefore it suffices to apply Theorem \ref{crofton product} and Def.\,\ref{mixedR}.
\end{proof}

\begin{thebibliography}{References}
\bibitem[1]{AK} D.\,Akhiezer, B.\,Kazarnovskii, {\it Average number of zeros and mixed symplectic volume of Finsler sets},
Geom. Funct. Anal., vol. 28 (2018), pp.1517--1547.

\bibitem[2]{Al} S.\,Alesker, {\it Theory of valuations on manifolds: a survey}, Geom. Funct. Anal., vol. 17 (2007), p. 1321--1341.  

\bibitem[3]{AB} S.\,Alesker, J.\,Bernstein, {\it Range characterization of the cosine tranform on higher Grassmanians},
Advances in Math. 184, no. 2 (2004), pp.367--379.  

\bibitem[4]{PF} J.-C.\,\' Alvarez Paiva, E.\,Fernandes,
{\it Gelfand transforms and Crofton formulas}, Selecta Math., vol. 13, no.3
(2008), pp.369 -- 390.

\bibitem[5]{Be} D.N.\,Bernstein, {\it The number of roots of a system of equations}, Funct. Anal. Appl. 9, no.2 (1975),
pp.95--96 (in Russian); Funct. Anal. Appl. 9, no.3 (1975), pp.183--185 (English translation).

\bibitem[6]{GS} I.M.\,Gelfand, M.M.\,Smirnov, {\it Lagrangians satisfying Crofton formulas, Radon transforms,
and nonlocal differentials}, Advances in Math. 109, no.2 (1994), pp. 188 -- 227.

\bibitem[7]{Ka} B.\,Kazarnovskii, {\it Newton polyhedra and zeros of systems of exponential sums}, Funct. Anal. Appl. 18, no.4 (1984), pp. 40--49 (in Russian);
Funct. Anal. Appl. 18, no.4 (1984), pp. 29--307 (English translaion).
  
\bibitem [8]{Sa} L.A.\,Santal\'o, {\it Integral Geometry and Geometric 
Probability}, Addison-Wesley, 1976.

\bibitem [9]{Sh} B.\,Shiffman, {\it Applications of geometric measure theory to value distribution theory for meromorphic maps},
in: {\it Value-Distribution Theory, Part A}, Marcel-Dekker, New York, 1974, pp. 63--96.

\bibitem [10]{Shi} T.\,Shifrin, {\it The kinematic formula in complex integral geometry}, Trans. Amer. Math. Soc., vol. 264, no.2
(1981),
pp. 255--293. 

\bibitem [11]{ZK} D.\,Zaporozhez, Z.\,Kabluchko, {\it Random determinants, mixed volumes of ellipsoids,
and zeros of Gaussian random fields}, Journal of Math. Sci., vol. 199, no.2 (2014), pp. 168--173.

\end {thebibliography}
\end {document}